\documentclass[11pt, reqno]{amsart}
\usepackage{bbm}
\usepackage[top=2.7cm,left=2.8cm,right=2.8cm,bottom=2.7cm]{geometry}
\usepackage{amsfonts}
\usepackage{amsmath,amsthm,amscd,mathrsfs,amssymb,graphicx,enumerate,
	dsfont}
\usepackage{hyperref}
\usepackage[usenames,dvipsnames]{xcolor}
\usepackage{xypic}
\usepackage{tikz-cd}

\hypersetup{colorlinks=true,citecolor=NavyBlue,linkcolor=Brown,urlcolor=Orange}

\tikzset{every picture/.style={line width=0.11mm}}
\newcommand{\oPerpSymbol}{\begin{tikzpicture}[scale=0.134]
	\draw (0,-0.5)--(0,1); \draw (-0.866,-0.5)--(0.866,-0.5);
	\draw (0,0) circle [radius=1];
	\end{tikzpicture}}
\newcommand{\oPerp}{\mathbin{\raisebox{-1pt}{\oPerpSymbol}}}

\setcounter{tocdepth}{1}
\setcounter{part}{0}

\newtheorem{thm2}{Theorem}
\newtheorem{cor2}{Corollary}

\newtheorem{thm}{Theorem}[section]

\newtheorem{lem}[thm]{Lemma}
\newtheorem{prop}[thm]{Proposition}

\theoremstyle{definition}
\newtheorem{defn}[thm]{Definition}

\newtheorem{rmk}[thm]{Remark}

\DeclareMathOperator{\ch}{char} \DeclareMathOperator{\Spec}{Spec}

\DeclareMathOperator{\Hom}{Hom}

\DeclareMathOperator{\Pic}{Pic} 

\DeclareMathOperator{\prim}{prim} 
\DeclareMathOperator{\tr}{tr} 
\DeclareMathOperator{\alg}{alg} 

\newcommand{\C}{\ensuremath\mathds{C}}

\newcommand{\Z}{\ensuremath\mathds{Z}}
\newcommand{\Q}{\ensuremath\mathds{Q}}

\newcommand{\h}{\ensuremath\mathfrak{h}}
\renewcommand{\1}{\ensuremath\mathds{1}}

\newcommand{\PP}{\ensuremath\mathds{P}}

\newcommand{\XX}{\mathcal X}
\renewcommand{\AA}{\mathcal A}
\renewcommand{\P}{\mathcal P}
\newcommand{\D}{\ensuremath\mathrm{D}}
\newcommand{\HH}{\ensuremath\mathrm{H}}

\newcommand{\CH}{\ensuremath\mathrm{CH}}

\newcommand{\LL}{\ensuremath\mathrm{L}}
\newcommand{\RR}{\ensuremath\mathrm{R}}

\newif\ifHideFoot
\HideFoottrue  
\HideFootfalse  

\ifHideFoot

\newcommand{\Lie}[1]{}
\newcommand{\Charles}[1]{}

\else 

\newcommand{\marg}[1]{\normalsize{{
			\color{red}\footnote{{\color{blue}#1}}}{\marginpar[\vskip
			-.25cm{\color{red}\hfill$\Rightarrow$\tiny\thefootnote}]{\vskip
				-.2cm{\color{red}$\Leftarrow$\tiny\thefootnote}}}}}
\newcommand{\Lie}[1]{\marg{(Lie) #1}}
\newcommand{\Charles}[1]{\marg{(Charles) #1}}

\fi

\AtBeginDocument{%
	\def\MR#1{}
}

\begin{document}

	\title[Cubic fourfolds, 
	Kuznetsov components and Chow motives]{Cubic fourfolds,\linebreak
		Kuznetsov components and Chow motives}
	
	\author{Lie Fu}
		\address{Institut de recherche math\'ematique avanc\'ee (IRMA), Universit\'e de Strasbourg, France}
	\email{lie.fu@math.unistra.fr}
	
	\author{Charles Vial}
	\address{Fakult\"at f\"ur Mathematik, Universit\"at Bielefeld, Germany} 
	\email{vial@math.uni-bielefeld.de}
	\thanks{2020 {\em Mathematics Subject Classification:} 14C25, 14C15,
		14F08, 	14J28, 14J42.}
	
	\thanks{{\em Key words and phrases.}  
		Motives, K3 surfaces, cubic fourfolds, derived categories, cohomology ring.}

	\thanks{The work of L.F.\ is supported by the University of Strasbourg Institute for Advanced Study (USIAS) and by the Agence Nationale de la Recherche (ANR), under project numbers ANR-16-CE40-0011 and ANR-20-CE40-0023. The research of Ch.V.\ is funded by the Deutsche Forschungsgemeinschaft (DFG, German Research Foundation) -- SFB-TRR 358/1 2023 -- 491392403}
	
	\begin{abstract} 
		We prove that the Chow motives of two smooth cubic fourfolds whose Kuznetsov components are Fourier--Mukai equivalent are isomorphic as Frobenius algebra objects.  
		As a corollary, there exists a Galois-equivariant isomorphism between their $\ell$-adic cohomology Frobenius algebras.
		We also discuss the case where the Kuznetsov component of a smooth cubic fourfold is equivalent to the derived category of a K3 surface.
	\end{abstract}
	
	\maketitle

	\section{Introduction}

	In \cite{FV2}, we asked whether the bounded derived category of coherent sheaves on a hyper-K\"ahler variety $X$ encodes the intersection theory on $X$ and its powers. Precisely, given two hyper-K\"ahler varieties $X$ and $X'$ that are derived-equivalent, i.e.\ $\D^b(X)\simeq \D^b(X')$, we asked whether the Chow motives with rational coefficients of $X$ and $X'$ are isomorphic as algebra objects. The main result of \cite{FV2} establishes this in the simplest case where $X$ and $X'$ are K3 surfaces. 
	The above expectation refines,  in the special case of hyper-K\"ahler varieties, a general conjecture of Orlov~\cite{MR1998775}, predicting that two derived-equivalent smooth projective varieties have isomorphic Chow motives with rational coefficients.
	
	Like hyper-K\"ahler varieties, the so-called \textit{K3-type varieties} also behave in many ways like K3 surfaces. By definition \cite{FLV2}, those are Fano varieties $X$ of even dimension $2n$ with Hodge numbers $h^{p,q}(X) = 0$ for all $p\neq q$ except for $h^{n-1,n+1}(X)= h^{n+1,n-1}(X) = 1$. 
	Some basic examples of such varieties are cubic fourfolds, Gushel--Mukai fourfolds and sixfolds \cite{Muk89, KP16}, and Debarre--Voisin 20-folds \cite{MR2746467}.
	As an important interplay between Fano varieties of K3 type and hyper-K\"ahler varieties, many hyper-K\"ahler varieties are constructed as moduli spaces of stable objects on some admissible subcategories of the derived categories of such Fano varieties \cite{BLMS, LLMS, LPZ18, LPZ19}. Due to these links, in \cite{FLV2}, we asked whether the Chow motives, considered as algebra objects, of Fano varieties of K3 type had similar properties as K3 surfaces (and  what is expected for hyper-K\"ahler varieties). 
	
	 Based on the above, we may ask whether two derived-equivalent Fano varieties of K3 type have isomorphic Chow motives as algebra objects. However, this question is uninteresting\,: due to the celebrated result of Bondal--Orlov \cite{BondalOrlov01}, any two derived-equivalent Fano varieties are isomorphic.
	In the case of a cubic fourfold $X$, Kuznetsov \cite{Kuz10} has identified an interesting admissible subcategory $\mathcal{A}_X$ of $\D^b(X)$, called the \textit{Kuznetsov component}, consisting of objects $E$ such that $\Hom(\mathcal{O}_X(i), E[m])=0$ for $i=0, 1, 2$  and any $m\in \Z$.  The Kuznetsov component is a K3-like triangulated category,
see \S \ref{subsec:KuznetsovComponent}. 
Our first main  result gives the correct analog of the aforementioned results on K3 surfaces for cubic fourfolds: two cubic fourfolds with Fourier--Mukai equivalent Kuznetsov components have isomorphic Chow motives as algebra objects. More precisely, we have the following.
	
	
	\begin{thm2}\label{T:cubics}
	Let $X$ and $X'$ be two smooth cubic fourfolds over a field $K$ with Fourier--Mukai equivalent Kuznetsov components $\AA_X \simeq \AA_{X'}$. Then $X$ and $X'$ have isomorphic Chow motives, as Frobenius algebra objects, in the category of rational Chow motives over $K$. 
	\end{thm2}

We refer to \S\ref{Subsec:MainResult} for the notion of \emph{Fourier--Mukai equivalence} for Kuznetsov components.  By \cite{LPZ-enhancement22}, if $K=\C$ and if $\AA_X $ and  $\AA_{X'}$ are equivalent as $\C$-linear triangulated categories, then they are Fourier--Mukai equivalent.

Following our previous work \cite[\S 2]{FV2}, a \textit{Frobenius algebra object} in a rigid tensor category is an algebra object together with an extra structure, namely an isomorphism to its dual object (which we call a non-degenerate quadratic space structure, see \S \ref{SS:quadratic}) with a compatibility condition. The Chow motive of any smooth projective variety carries a natural structure of Frobenius algebra object in the category of Chow motives, lifting the classical Frobenius algebra structure on the cohomology ring (which essentially consists of the cup-product $\smile$ together with the degree map $\int_X$). We refer to Section~\ref{S:frob} for more details. An immediate concrete application of Theorem~\ref{T:cubics} is the following result.

\begin{cor2}\label{Cor:main}
	Let $X$ and $X'$ be two smooth cubic fourfolds over a field $K$. Assume that their Kuznetsov components are Fourier--Mukai equivalent $\AA_X \simeq \AA_{X'}$. Then there exists a correspondence $\Gamma \in \CH^4(X\times_K X')\otimes \Q$ such that for any Weil cohomology $\HH^*$ with coefficients in a field of characteristic zero,
	$$
	\Gamma_* : \HH^*(X) \stackrel{\sim}{\longrightarrow} \HH^*(X')$$ is an isomorphism of Frobenius algebras. In particular,
	\begin{enumerate}[(i)]
\item for any prime number $\ell \neq \operatorname{char} K$, there exists a Galois-equivariant isomorphism $\HH^*(X_{\bar K},\Q_\ell)\simeq \HH^*(X'_{\bar K},\Q_\ell)$ of  $\ell$-adic cohomology Frobenius algebras\,;
\item there exists an isocrystal isomorphism $\HH^*_{\mathrm{cris}}(X)\simeq \HH^*_{\mathrm{cris}}(X')$ of crystalline cohomology Frobenius  algebras\,;
\item if $K=\C$, there exists a Hodge isomorphism $\HH^*(X, \mathbb{Q})\simeq \HH^*(X', \mathbb{Q})$ of Betti cohomology Frobenius algebras.
	\end{enumerate}
\end{cor2}

We note that item $(iii)$ can also be directly deduced from arguments due to Addington--Thomas~\cite{MR3229044} and Huybrechts~\cite{MR3705236}\,; see Remark~\ref{R:complex}.
The proof of Theorem~\ref{T:cubics} is given in \S \ref{S:mainproof} and employs essentially two different sources of techniques. On the one hand, we proceed to a refined Chow--K\"unneth decomposition (\S \ref{SS:rCK}), thereby cutting the motive of a cubic fourfold into the sum of its transcendental part and its algebraic part. The transcendental part, as well as its relation to the algebraic part, is then dealt with  via a weight argument (\S \ref{ss:Weight}), while the algebraic part is dealt with via considering the Chow ring modulo numerical equivalence (Proposition~\ref{P:cubicsmotives}).
On the other hand, our proof also relies on some cycle-theoretic properties of cubic fourfolds, in particular those recently established in \cite{FLV2, FLV3}. 
First, the so-called Franchetta property for cubic fourfolds and their squares (Proposition~\ref{T:Franchetta}) is used 
to establish the following.

\begin{thm2}[Theorem~\ref{T:GammaTrans}] \label{T:cubics-quadratic}
	Let $X$ and $X'$ be two smooth cubic fourfolds over a field $K$ with Fourier--Mukai equivalent Kuznetsov components $\AA_X \simeq \AA_{X'}$. Then the transcendental motives $\h^4_{\operatorname{tr}}(X)(2)$ and $\h^4_{\operatorname{tr}}(X')(2)$, as defined in \S \ref{SS:rCK}, are isomorphic as quadratic space objects in the category of rational Chow motives over $K$. 
\end{thm2}
 Concretely, this involves exhibiting an isomorphism $\Gamma_{\tr} : \h^4_{\tr}(X) \to \h^4_{\tr}(X')$ with inverse given by its transpose. Precisely, we show in Theorem~\ref{T:GammaTrans} that such an isomorphism is induced by the degree-4 part of the Mukai vector of the Fourier--Mukai kernel inducing the equivalence $\AA_X\simeq \AA_{X'}$. Such an isomorphism is then upgraded in Proposition~\ref{P:cubicsmotives} to an isomorphism $\Gamma : \h(X)\to \h(X')$ with inverse given by its transpose, or equivalently, to a quadratic space object isomorphism $\Gamma : \h(X)(2)\to \h(X')(2)$.
 
 The next step towards the proof of Theorem~\ref{T:cubics} consists in showing that this isomorphism $\Gamma : \h(X) \to \h(X')$ respects the algebra structure. This is achieved in Proposition~\ref{P:upgradeFrob}, the proof of which relies on the recently established \textit{multiplicative Chow--K\"unneth relation}~\eqref{E:mult} for cubic fourfolds (Theorem~\ref{thm:MCKcubic}).
 \medskip
%
%
%
%
%

To make the analogy with our previous work \cite{FV2} even more transparent,  we also investigate the case of cubic fourfolds with associated (twisted) K3 surfaces, resulting in the following strengthening of \cite[Theorem 0.4]{Buelles}.
\begin{thm2}[Theorem~\ref{T:cubicK3v3}]	\label{T:cubicK3}
Let $X$ be a smooth cubic fourfold over a field $K$ and let $S$ be a K3 surface over $K$ equipped with a Brauer class $\alpha$. Assume that $\AA_X$ and $\D^b(S,\alpha)$ are Fourier--Mukai equivalent. Then the transcendental motives $\h^4_{\tr}(X)(2)$ and $\h^2_{\tr}(S)(1)$ are isomorphic as quadratic space objects in the category of rational Chow motives over $K$. 
\end{thm2}

Note that by Orlov's result, any equivalence between $\AA_X$ and $\D^b(S,\alpha)$ is a Fourier--Mukai equivalent,  at least when $\alpha=0$.

In a similar vein to Corollary~\ref{Cor:main}, one obtains from Theorem~\ref{T:cubics-quadratic} and Theorem~\ref{T:cubicK3} respectively, after passing to any Weil cohomology theory $\HH^*$ (e.g., Betti, $\ell$-adic, crystalline), isomorphisms 
$$\HH_{\tr}^*(X) \stackrel{\sim}{\longrightarrow} \HH_{\tr}^*(X'),$$
$$\HH_{\tr}^*(X) \stackrel{\sim}{\longrightarrow} \HH_{\tr}^*(S)$$
	that are compatible with  the natural extra structures (e.g., Hodge, Galois, Frobenius) and with the quadratic form $(\alpha,\beta) \mapsto \int_X \alpha \smile \beta$.\medskip


\noindent\textbf{Conventions.} From \S \ref{SS:MCK} onwards, $\CH^*(-)$ denotes the Chow group with rational coefficients, $\overline{\CH}^*(-)$ denotes its reduction modulo numerical equivalence, and motives are with rational coefficients.\medskip

\noindent\textbf{Acknowledgments.} We thank Xiaolei Zhao for helpful discussions and we thank the referee for their thoughtful remarks.

\section{Chow motives and Frobenius algebra objects}\label{S:frob}

In this section, we fix  a commutative ring $R$.

\subsection{Chow motives} We refer to \cite[\S 4]{MR2115000} for more details.
Briefly, a \emph{Chow motive}, or motive, over a field $K$ with coefficients in $R$, is a triple $(X,p,n)$ consisting of a smooth projective variety $X$ over $K$, an idempotent correspondence $p\in \CH^{\dim X}(X\times_K X)\otimes R$, and an integer $n\in \Z$. The motive of a smooth projective variety $X$ over $K$ is the motive $\h(X) := (X,\Delta_X,0)$, where $\Delta_X$ is the class of the diagonal inside $X\times_KX$. A morphism $\Gamma : (X,p,n) \to (Y,q,m)$ between two motives is a correspondence $\Gamma \in \CH^{\dim X -n+m}(X\times_K Y) \otimes R$ such that $q\circ \Gamma \circ p = \Gamma$. The composition of morphisms is given by the composition of correspondences (as in \cite[\S 16]{MR1644323}).
The category of Chow motives $\mathcal M(K)_R$ over $K$ with coefficients in $R$ forms a $R$-linear rigid $\otimes$-category with unit $\mathds{1} = \h(\Spec K)$, with tensor product given by $(X, p, n)\otimes (Y, q, m)=(X\times_K Y, p\times q, n+m)$ and with duality given by $(X,p,n)^\vee = (X,{}^tp, \dim X -n)$, where ${}^tp$ denotes the transpose of the correspondence $p$.

 Fix a homomorphism $R\to F$ to a field $F$ and fix a Weil cohomology theory $\HH^*$ with field of coefficients $F$, i.e., a $\otimes$-functor $\HH^* : \mathcal M(K)_R \to \mathrm{GrVec}_F$ to the category of $\Z$-graded $F$-vector spaces such that $\HH^i(\mathds{1}(-1)) = 0$ for $i\neq 2$\,; see \cite[Proposition~4.2.5.1]{MR2115000}. We also call such a $\otimes$-functor an \emph{$\HH$-realization}. One thereby obtains the category of homological motives $\mathcal M_{\HH}(K)_R$ (or $\mathcal M_{\hom}(K)_R$, when $\HH$ is clear from the context).


\subsection{Algebra structure} \label{SS:algebra}
We consider the general situation where $\mathcal C$ is an $R$-linear $\otimes$-category with unit $\mathds 1$\,; cf.~\cite[\S 2.2.2]{MR2115000}. An \emph{algebra structure} on an object $M$ in $\mathcal C$ is the data consisting of a \emph{unit morphism} $\epsilon : \mathds{1} \to M$ and a \emph{multiplication morphism} $\mu : M\otimes M \to M$ satisfying the associativity axiom $\mu \circ (\mathrm{id}_M\otimes \mu) = \mu \circ (\mu\otimes \mathrm{id}_M) $ and the unit axiom $\mu\circ (\mathrm{id}_M\otimes \epsilon) = \mathrm{id}_M = \mu \circ (\epsilon \otimes \mathrm{id}_M)$. The algebra structure is said to be commutative if it satisfies the commutativity axiom $\mu\circ \tau =  \mu$ where $\tau : M\otimes M \to M\otimes M$ is the morphism permuting the two factors. \medskip

In case $\mathcal C$ is the category of Chow motives over $K$, then the Chow motive $\h(X)$ of a smooth projective variety $X$ over $K$ is naturally endowed with a commutative algebra structure\,: the multiplication $\mu : \h(X)\otimes \h(X) \to \h(X)$ is given by pulling back along the diagonal embedding $\delta_X : X \hookrightarrow X\times X$, while the unit morphism $\eta : \mathds 1 \to \h(X)$ is given by pulling back along the structure morphism $\epsilon_X : X\to \Spec K$.
Taking the $\HH$-realization, this algebra structure endows $\HH^*(X)$ with the usual super-commutative algebra structure given by cup-product.

\subsection{Quadratic space structure} \label{SS:quadratic}
We now consider the general situation where $\mathcal C$ is an $R$-linear rigid $\otimes$-category with unit $\mathds 1$ and equipped with a $\otimes$-invertible object denoted $\mathds 1(1)$. Let $d$ be an integer. A \emph{degree-$d$ quadratic space structure}, or by abuse a \emph{quadratic space structure}, on an object~$M$ of $\mathcal C$ consists of a morphism, called \emph{quadratic form}, 
 $$q : M\otimes M \to \mathds 1(-d),$$
 which is commutative $q\circ \tau=q$, where $\tau: M\otimes M\to M\otimes M$ is the switching morphism.
 We say that an object $M$ equipped with the quadratic form $q$ above is a \emph{degree-$d$ quadratic space object} in  $\mathcal C$, or by abuse a \emph{quadratic space object}.
The quadratic form $q : M\otimes M \to \mathds 1(-d)$ is said to be \emph{non-degenerate} if the induced morphism $M(d) \to M^\vee$ is an isomorphism. Here the morphism $M(d) \to M^\vee$ is obtained by tensoring $q$ with $\mathrm{id}_{M^\vee(d)}$ and pre-composing with $\mathrm{id}_{M(d)}\otimes \operatorname{coev}$, where  $\operatorname{coev}: \mathds{1} \to M\otimes M^\vee$ is the co-evaluation map.
\medskip

In case $\mathcal C$ is the category of Chow motives over $K$, then the Chow motive $\h(X)$ of a smooth projective variety $X$ of dimension $d$ over $K$ is naturally endowed with a non-degenerate degree-$d$ quadratic space structure\,: the quadratic form $q_X: \h(X) \otimes \h(X) \to \mathds{1}(-d)$ is simply given by the class of the diagonal $\Delta_X$. In relation to the natural algebra structure on $\h(X)$, we have
 $$
 \begin{tikzcd}
 q_X:	\h(X)\otimes \h(X)\arrow{r}{\mu} & \h(X) \arrow{r}{\epsilon} & \mathds{1}(-d),
 \end{tikzcd}
 $$
 where $\epsilon : \h(X) \to \mathds 1(-d)$ is the dual of the unit morphism $\eta : \mathds 1 \to \h(X)$.
Taking the $\HH$-realization, this degree-$d$ quadratic structure endows $\HH^*(X)$, as a super-vector space, with the usual quadratic structure given by 
\begin{equation}\label{E:quadratic}
\begin{tikzcd}
q_X:	\HH^*(X)\otimes \HH^*(X)\arrow{r}{\smile} & \HH^*(X) \arrow{r}{\deg} & F(-d).
\end{tikzcd}
\end{equation}
 Note that when $d$ is odd the form is anti-symmetric on $\HH^d(X)$, while when $d$ is even, the form is symmetric on $\HH^d(X)$.\medskip


%

In what follows, if $M = (X,p,d)$ is a Chow motive with $\dim X = 2d$, we view $M$ as a quadratic space object via 
$$
\begin{tikzcd}
q_M:	M\otimes M \arrow[r,hook] & \h(X)(d) \otimes \h(X)(d) \arrow{r}{\mu} & \h(X)(2d) \arrow{r}{\epsilon} & \mathds{1}.
\end{tikzcd}
$$

\begin{prop}\label{P:quadratic}
	Let $M = (X,p,d)$ and $M' = (X',p',d')$ be Chow motives in $\mathcal M(K)_R$. Assume that $p={}^tp$, $p'={}^tp'$, $\dim X  = 2d$ and $\dim X' = 2d'$, so that $M=M^\vee$ and $M'=M'^\vee$. The following are equivalent\,:
	\begin{enumerate}[(i)]
		\item $M$ and $M'$ are isomorphic as quadratic space objects\,;
		\item There exists an isomorphism $\Gamma : M \stackrel{\sim}{\longrightarrow} M'$ of Chow motives with $\Gamma^{-1} = {}^t\Gamma$.
	\end{enumerate}
\end{prop}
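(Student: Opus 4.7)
The equivalence is essentially formal, following from the explicit description of duals in the category of Chow motives. The plan has three main steps.

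First, I would identify the dual motives with the motives themselves. Given the hypotheses $p={}^tp$, $p'={}^tp'$, $\dim X=2d$, and $\dim X'=2d'$, the definition $M^\vee=(X,{}^tp,\dim X-d)$ yields literally $M^\vee=(X,p,d)=M$ as triples, and similarly $M'^\vee=M'$. Under this tautological identification, the dual $\Gamma^\vee : M'^\vee \to M^\vee$ of a morphism $\Gamma : M \to M'$, which by the definition of duality in $\mathcal{M}(K)_R$ is represented by the transposed correspondence, becomes simply ${}^t\Gamma : M' \to M$ regarded as a morphism between the motives themselves.

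Second, I would verify that, under the identification $M^\vee=M$ just described, the canonical morphism $\phi_M : M \to M^\vee$ induced by the non-degenerate quadratic form $q_M$ is the identity of $M$. For $\h(X)$ with $\dim X=2d$, the standard form $q_{\h(X)}$ is represented by the diagonal class $\Delta_X \in \CH^{2d}(X\times X)$, which is the unit of composition of correspondences, so $\phi_{\h(X)(d)}$ becomes $\Delta_X$ itself and hence the identity under the formal identification of triples $\h(X)(d)^\vee = \h(X)(d)$. Since $q_M$ is by definition (see the diagram preceding the proposition) the restriction of this standard form to the direct summand $M \hookrightarrow \h(X)(d)$ cut out by $p$, the induced map $\phi_M$ is the restriction of $\mathrm{id}$, namely $p$ viewed as an endomorphism of $M$, which equals $\mathrm{id}_M$.

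Third, I would unwind the definition of a quadratic-space isomorphism and conclude. A morphism $\Gamma : M \to M'$ is an isomorphism of quadratic spaces precisely when $q_{M'}\circ(\Gamma\otimes\Gamma)=q_M$, which by adjunction is equivalent to the commutativity of
$$
\begin{tikzcd}
M \ar[r,"\phi_M"] \ar[d,"\Gamma"'] & M^\vee \\
M' \ar[r,"\phi_{M'}"'] & M'^\vee. \ar[u,"\Gamma^\vee"']
\end{tikzcd}
$$
Invoking the two identifications from the previous steps, this diagram collapses to the single identity ${}^t\Gamma\circ\Gamma=\mathrm{id}_M$. Together with $\Gamma$ being an isomorphism, this is visibly equivalent to ${}^t\Gamma=\Gamma^{-1}$, which is condition (ii). Conversely, given (ii), one has ${}^t\Gamma\circ\Gamma=\mathrm{id}_M$, and reversing the above rewrites this as $q_{M'}\circ(\Gamma\otimes\Gamma)=q_M$, yielding (i).

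The only delicate point is the second step: carefully unwinding that $\phi_M$ equals $\mathrm{id}_M$ under the identification $M^\vee=M$. This amounts to a diagram chase using the defining properties of the evaluation and coevaluation morphisms $\mathrm{ev}_{\h(X)}$ and $\mathrm{coev}_{\h(X)}$ in $\mathcal{M}(K)_R$, both of which are represented by $\Delta_X$, together with the identity $\Delta_X \circ \Delta_X = \Delta_X$ in the correspondence algebra. Once this identification is in place, everything else is the formal bilinear-forms yoga in a rigid tensor category.
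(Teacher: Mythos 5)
Your proof is correct and follows essentially the same route as the paper: identify $M=M^\vee$ and $M'=M'^\vee$ via the transpose so that $\Gamma^\vee$ becomes ${}^t\Gamma$, then translate $q_{M'}\circ(\Gamma\otimes\Gamma)=q_M$ into ${}^t\Gamma\circ\Gamma=\mathrm{id}_M$. Your second step (carefully verifying that the map $M\to M^\vee$ induced by $q_M$ is the identity under this identification) is a more explicit version of what the paper asserts in a single line, but the substance is identical.
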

\begin{proof}
The quadratic forms $q_M$ and $q_{M'}$ are the (non-degenerate) quadratic forms associated to the identifications $M=M^\vee$ and $M'=M'^\vee$, respectively. By definition, a morphism $\Gamma : M\to M'$ is a morphism of quadratic space objects if and only if $q_{M'}\circ (\Gamma\otimes \Gamma) = q_M$. 
The latter is then equivalent to ${}^t\Gamma \circ \Gamma = \mathrm{id}_M$, where we have identified $\Gamma^\vee$ with ${}^t\Gamma$ via the identifications $M=M^\vee$ and $M'=M'^\vee$. This shows that a morphism $\Gamma : M\to M'$ is a morphism of quadratic space objects if and only if $\Gamma$ is split injective with left-inverse ${}^t\Gamma$. This proves the proposition.
\end{proof}

\subsection{Frobenius algebra structure} \label{subsec:Frobenius}
This notion was introduced in \cite[\S 2]{FV2}, as a generalization of the classical Frobenius algebras (cf.~\cite{MR2037238}).
Consider again the general situation where $\mathcal C$ is an $R$-linear rigid $\otimes$-category with unit $\mathds 1$ and equipped with a $\otimes$-invertible object denoted $\mathds 1(1)$. Let $d$ be an integer. A \emph{degree-$d$ (commutative) Frobenius algebra structure} on an object~$M$ of $\mathcal C$ consists of a unit morphism $\epsilon : \mathds{1} \to M$, a multiplication morphism $\mu : M\otimes M \to M$ and a non-degenerate degree-$d$ quadratic form $q : M\otimes M \to \mathds{1}(-d)$ such that $(M,\mu,\epsilon)$ is an algebra object, and the following compatibility relation, called the Frobenius condition, holds:
$$(\mathrm{id}_M\otimes \mu)\circ (\delta \otimes \mathrm{id}_M) = \delta\circ \mu = (\mu \otimes \mathrm{id}_M) \circ (\mathrm{id}_M\otimes \delta),$$
where $\delta : M \to M\otimes M(d)$ is the dual of the multiplication $\mu$, via the identification $M(d)\simeq M^\vee$ provided by the non-degenerate quadratic form $q$.
\medskip

In case $\mathcal C$ is the category of Chow motives over $K$, then the Chow motive $\h(X)$ of a smooth projective variety $X$ of dimension $d$ over $K$ is naturally endowed with a degree-$d$ Frobenius algebra structure. That the unit, multiplication and quadratic form given in \S \S \ref{SS:algebra}-\ref{SS:quadratic} above do define such a structure on $\h(X)$ is explained in \cite[Lemma~2.7]{FV2}.
Taking the $\HH$-realization and forgetting Tate twists, this degree-$d$ Frobenius algebra structure endows $\HH^*(X)$ with the usual Frobenius algebra structure (consisting of the cup-product together with the quadratic form $q_X$ of~\eqref{E:quadratic})\,; see  \cite[Example~2.5]{FV2}.
%
%

\section{The Chow ring of powers of cubic fourfolds}\label{SS:MCK}

In this section, we gather the cycle-theoretic results needed about cubic fourfolds\,; Proposition~\ref{T:Franchetta} is used to obtain isomorphisms as quadratic space objects as in Theorem~\ref{T:cubics-quadratic}, and Theorem~\ref{thm:MCKcubic} is used in addition to upgrade those isomorphisms to isomorphisms of algebra objects as in Theorem~\ref{T:cubics}.\medskip

From now on, we fix a field $K$ with algebraic closure $\bar K$, Chow groups and motives are with rational coefficients ($R=\Q$), and we fix a Weil cohomology theory $\HH^*$ with coefficients in a field of characteristic zero.
\medskip

Recall that a \emph{Chow--K\"unneth decomposition}, or \emph{weight decomposition}, for a motive $M$ is a finite grading $M = \bigoplus_{i\in \Z} M^i$ such that $\HH^*(M^i) = \HH^i(M)$. This notion was introduced by Murre~\cite{Murre}, who conjectured that every motive admits such a decomposition.
Now, if $M$ is a Chow motive equipped with an algebra structure (e.g., $M=\h(X)$ equipped with the intersection pairing), then we say that a Chow--K\"unneth decomposition $M=\bigoplus_{i\in \Z} M^i$ is \emph{multiplicative} if it defines an algebra grading, i.e., if the composition $M^i\otimes M^j \hookrightarrow M\otimes M \to M$ factors through $M^{i+j}$ for all $i,j$. This notion was introduced in \cite[\S 8]{SV}, where it was conjectured that the motive of any hyper-K\"ahler variety admits a multiplicative Chow--K\"unneth decomposition.\medskip

	Let $B$ be the open subset of $\PP\HH^0(\PP^5,\mathcal{O}(3))$
	parameterizing smooth cubic fourfolds, let $\XX \to B$ be the universal family of smooth cubic fourfolds and $\operatorname{ev}:\XX\to \PP^{5}$ be the evaluation map. If $H:=\operatorname{ev}^{*}(c_{1}(\mathcal{O}_{\PP^{5}}(1))) \in \CH^1(\XX)$ denotes the relative hyperplane section, then 
	\begin{equation}\label{eq:CKcubic}
	\pi^0_\XX = \frac{1}{3} H^4 \times_B \XX, 	\quad	\pi^2_\XX = \frac{1}{3} H^3 \times_B H, \quad		\pi^6_\XX = \frac{1}{3} H \times_B H^3, 	\quad	\pi^8_\XX = \frac{1}{3} \XX \times_B H^4\\
	\end{equation}
	\begin{equation*}
	\text{and} \quad \pi^4_\XX = \Delta_{\XX/B} - 	\pi^0_\XX - 	\pi^2_\XX - 	\pi^6_\XX -	\pi^8_\XX
	\end{equation*}
	defines a relative Chow--K\"unneth decomposition, in the sense that its specialization to any fiber $\XX_b$ over $b \in B$ gives a Chow--K\"unneth decomposition
	of $\XX_b$. Given a smooth cubic fourfold $X$, we denote $h_X$ the restriction of $H$ to $X$ and we denote $\{\pi_X^0, \pi_X^2, \pi_X^4,\pi_X^6, \pi_X^8\}$ the restriction of the above projectors to the fiber $X$.
	\medskip

	In our previous work \cite{FLV2}, we established the following two results\,:
	
	\begin{thm}\label{thm:MCKcubic}
		The  Chow--K\"unneth decomposition $\{\pi_X^0, \pi_X^2, \pi_X^4,\pi_X^6, \pi_X^8\}$  is \emph{multiplicative}. Equivalently,  in $\CH^8(X\times X\times X)$, we have
		\begin{align}\label{E:mult}
		\delta_X = & \  \frac{1}{3} \big( p_{12}^*\Delta_X\cdot p_3^*h_X^4 + p_{13}^*\Delta_X\cdot p_2^*h_X^4 + p_{23}^*\Delta_X \cdot p_1^*h_X^4 \big)+  P\big(p_{1}^*h_X, p_2^*h_X, p_3^*h _X\big),
		\end{align} 
		where $P$ is an explicit symmetric rational polynomial in 3 variables.
	\end{thm}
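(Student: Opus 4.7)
The plan is to establish the explicit identity~\eqref{E:mult}, from which multiplicativity of the given Chow--K\"unneth decomposition~\eqref{eq:CKcubic} follows by a direct computation with the projectors; by the symmetry of the situation and the relation $\pi_X^4 = \Delta_X - \pi_X^0 - \pi_X^2 - \pi_X^6 - \pi_X^8$, multiplicativity is equivalent to an identity of the shape~\eqref{E:mult} for some symmetric rational polynomial $P$, whose coefficients are then forced by matching in cohomology. My overall strategy is a spread-and-specialize argument over the universal family $\XX \to B$, combined with a Franchetta-type property for the triple self-product $\XX^3/B$.

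First, I would verify the cohomological shadow of~\eqref{E:mult}. Using the ring decomposition
$$\HH^*(X,\Q) \;=\; \Q\langle 1, h_X, h_X^2, h_X^3, h_X^4\rangle \,\oplus\, \HH^4_{\mathrm{prim}}(X)$$
together with the K\"unneth decomposition of $\HH^*(X^3)$, the cup-product map encoded by $\delta_X$ can be computed term by term on each K\"unneth summand. The three summands $p_{ij}^*\Delta_X \cdot p_k^* h_X^4$ account precisely for those Künneth components in which the $k$-th factor pairs against the top algebraic class $h_X^4$; this absorbs in particular all contributions involving $\HH^4_{\mathrm{prim}}(X)$. The residual discrepancy lies entirely in the subring generated by the $p_i^*h_X$, and a straightforward combinatorial match determines $P$ uniquely (and symmetrically).

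Second, I would promote the cohomological identity to Chow by a spread-out argument. Let
$$\Xi \;:=\; \delta_{\XX/B} \;-\; \tfrac{1}{3}\bigl(p_{12}^*\Delta_{\XX/B}\cdot p_3^*H^4 + p_{13}^*\Delta_{\XX/B}\cdot p_2^*H^4 + p_{23}^*\Delta_{\XX/B}\cdot p_1^*H^4\bigr) \;-\; P(p_1^*H,p_2^*H,p_3^*H)$$
in $\CH^8(\XX\times_B \XX\times_B \XX)$. By the previous step, $\Xi$ restricts to a cohomologically trivial cycle on every fiber $X^3$; moreover, each summand of $\Xi|_{X^3}$ is built from $h_X$ and pulled-back diagonals, so it sits in the tautological subring of $\CH^*(X^3)$. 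A Franchetta-type property for $\XX^3/B$ — asserting that the restriction of a generically defined cycle to a fiber is injective modulo cohomological equivalence — would then force $\Xi|_{X^3}=0$ for every smooth cubic fourfold, giving~\eqref{E:mult}.

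The main obstacle is precisely this Franchetta-type property for $\XX^3/B$. Proposition~\ref{T:Franchetta} provides it only for squares; extending to triples is the delicate technical step. The two most plausible routes are: a direct Bloch--Srinivas-style argument over a very general cubic fourfold, combined with a careful stratification of the tautological ring of $X^3$; or a reduction that expresses $\delta_{\XX/B}$ as a residual intersection of $\XX^3$ inside the small diagonal of $(\PP^5)^3$, thereby rewriting the left-hand side of~\eqref{E:mult} in terms of cycles pulled back from $\XX^2/B$, where the Franchetta property is already available. Either route should simultaneously pin down the explicit polynomial~$P$.
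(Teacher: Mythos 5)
Your proposal takes a genuinely different route from the paper. The paper's proof of Theorem~\ref{thm:MCKcubic} is essentially a reduction to cited results: multiplicativity of the Chow--K\"unneth decomposition is \cite[Corollary~1]{FLV2}, the explicit identity~\eqref{E:mult} is due to Diaz~\cite{Diaz}, the equivalence of the two formulations is \cite[Proposition~2.8]{FLV3}, and the only new content in the present paper is the extension from $\C$ to an arbitrary base field $K$ (Lefschetz principle, injectivity of $\CH(X^3)\to\CH(X^3_\Omega)$, and specialization from a characteristic-zero lift when $\operatorname{char} K>0$). You instead sketch a direct derivation of~\eqref{E:mult} from scratch. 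This is roughly what happens inside those references --- Diaz's proof is indeed a spread-and-specialize argument over the parameter space of cubic hypersurfaces --- so in spirit you are re-proving the cited ingredient rather than the statement as the paper organizes it.

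There is, however, a gap that you yourself flag, and it is a real one relative to what this paper provides: Proposition~\ref{T:Franchetta} gives the Franchetta property for $\XX^n_{/B}$ only for $n\leq 2$, whereas your spread argument as stated needs it for $n=3$. The paper does record (Remark~2.3) that \cite[Theorem~2]{FLV3} extends the Franchetta property to $n\leq 4$, so the gap is fillable by a citation you did not have, but your two proposed routes for closing it are left as suggestions. Your ``route 2'' (realizing $\delta_{\XX/B}$ by a residual intersection inside the small diagonal of $(\PP^5)^3$ to reduce to $\XX^2_{/B}$) is essentially Diaz's actual strategy and does circumvent Franchetta on triples, but it is not carried through. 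Two further things are glossed over. First, your cohomological step --- matching K\"unneth components to ``determine $P$ uniquely'' --- hides a nontrivial computation; the coefficients of $P$ are not just a bookkeeping exercise (Diaz devotes real work to this, and the residual-intersection/excess-bundle computation for $X\subset\PP^5$ is the natural way to produce them). Second, you do not address the base field at all: if you prove Franchetta for $\XX^3_{/B}$ over $K$ directly your argument would handle arbitrary $K$ for free, but if you instead invoke results proved over $\C$ (as the available references do), you still need the Lefschetz/specialization step the paper supplies.
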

\begin{proof} That the Chow--K\"unneth decomposition $\{\pi_X^0, \pi_X^2, \pi_X^4,\pi_X^6, \pi_X^8\}$  is multiplicative is \cite[Corollary~1]{FLV2}. The identity \eqref{E:mult} is due to Diaz~\cite{Diaz}. That the two formulations are equivalent is \cite[Proposition~2.8]{FLV3}. The proof in \textit{loc.~cit.~}is over $\mathbb{C}$, but  one can extend the result to arbitrary base fields as follows. By the Lefschetz principle, \eqref{E:mult} holds for any algebraically closed field of characteristic zero. Since the pull-back morphism $\CH(X^3)\to \CH(X^3_\Omega)$  associated with the field extension from $K$ to a universal domain $\Omega$ is injective, and all the terms in \eqref{E:mult} are defined over $K$, we have the result in characteristic zero. If $\ch(K)>0$, take a lifting $\mathcal{X}/W$ over some discrete valuation ring $W$ with residue field $K$ and fraction field of characteristic zero. Then by specialization, the validity of \eqref{E:mult} on the generic fiber implies the same result on the special fiber. 
\end{proof}
	
%
	
%
%
%
	
	\begin{prop}\label{T:Franchetta}
Let $\XX\to B$ be the above-defined family of smooth cubic fourfolds and let $X=X_b$ be a fiber. For a positive integer $n$, define $\operatorname{GDCH}_B^*(X^n)$, which stands for generically defined cycles, to be the image of the Gysin restriction ring homomorphism
$$\CH^*(\XX_{/B}^n) \to \CH^*(X^n).$$ Then the map $\operatorname{GDCH}_B^*(X^n) \hookrightarrow \CH^*(X^n) \twoheadrightarrow \overline{\CH}\,^*(X^n)$ is injective for $n\leq 2$.
We say that $\XX_{/B}^n$ has the \emph{Franchetta property} for $n\leq 2$.
	\end{prop}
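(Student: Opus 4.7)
The strategy is to identify an explicit spanning set for $\operatorname{GDCH}_B^*(X^n)$ in each case and then verify that its elements are linearly independent modulo numerical equivalence. The case $n = 0$ is trivial.

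For $n=1$, the key observation is that $B$ is an open subset of the projective space $\PP\HH^0(\PP^5, \mathcal O(3))$, so $\CH^*(B) = \Q$. Combining this with the fact that $\XX$ is a relative hypersurface in $B \times \PP^5$, an excision/spreading argument (essentially propagating cycles along the trivial Chow group of the base) shows that every generically defined class on a fiber $X_b$ is a $\Q$-linear combination of the powers $h_X^k$ for $k = 0, \ldots, 4$. Each of these sits in a distinct codimension and is numerically non-zero (via $\int_X h_X^k \cdot h_X^{4-k} = 3$), so they are linearly independent in $\overline{\CH}^*(X)$.

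For $n=2$, the fibered square $\XX_{/B}^2$ is a relative complete intersection in $B \times \PP^5 \times \PP^5$. Ambient pullbacks provide the monomials $h_{X,1}^a h_{X,2}^b$ (with $0 \leq a, b \leq 4$) as generically defined classes on $X^2$, and an additional generically defined class comes from the restriction of the relative diagonal $\Delta_{\XX/B}$. By the multiplicative Chow--K\"unneth decomposition (Theorem~\ref{thm:MCKcubic}) together with~\eqref{eq:CKcubic}, one has
\[
\Delta_X = \pi_X^0 + \pi_X^2 + \pi_X^4 + \pi_X^6 + \pi_X^8,
\]
where every projector other than $\pi_X^4$ is an explicit polynomial in $h_{X,1}$ and $h_{X,2}$. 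The problem thus reduces to showing that $\operatorname{GDCH}_B^*(X^2)$ is spanned by the monomial classes $\{h_{X,1}^a h_{X,2}^b\}$ together with the products $\{h_{X,1}^a h_{X,2}^b \cdot \pi_X^4\}$, and that these classes are numerically independent.

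Numerical independence can be checked via the K\"unneth decomposition of $\HH^*(X \times X)$\,: the monomials span the ``hyperplane'' K\"unneth components, while $\pi_X^4$ is the K\"unneth projector onto $\HH^4(X) \otimes \HH^4(X)$ and is therefore orthogonal to the monomials lying in different K\"unneth pieces\,; pairing against suitable dual classes yields non-zero intersection numbers. The main obstacle is the preceding step\,: proving that $\operatorname{GDCH}_B^*(X^2)$ contains no further generically defined classes beyond the ambient monomials and the relative diagonal. This requires a careful excision/stratification argument for $\CH^*(\XX_{/B}^2)$ inside $B \times \PP^5 \times \PP^5$, exploiting $\CH^{>0}(B) = 0$, and is where the proof ties most closely to the framework developed in~\cite{FLV2}.
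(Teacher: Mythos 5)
The paper's own ``proof'' of Proposition~\ref{T:Franchetta} is just a citation to \cite[Proposition~4.6]{FLV2} together with a remark on base change, so there is no self-contained argument here to compare against. Your proposal attempts to reconstruct the argument of \emph{loc.\ cit.}, and at the level of strategy it is on the right track: identify an explicit spanning set for $\operatorname{GDCH}_B^*(X^n)$ and check that its elements remain independent after passing to numerical equivalence. However, as written the proof has a genuine gap: you acknowledge that ``the main obstacle is \ldots proving that $\operatorname{GDCH}_B^*(X^2)$ contains no further generically defined classes beyond the ambient monomials and the relative diagonal,'' and then defer this step to ``a careful excision/stratification argument.'' This is precisely the content of the proposition, not a detail to be deferred. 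Without it, the argument only shows that the proposed spanning set is numerically independent, which is vacuous if $\operatorname{GDCH}_B^*(X^2)$ is in fact larger. The mechanism one needs here (the ``stratified projective bundle argument'' of \cite{FLV2}) is that $\operatorname{ev}^{\times 2}: \XX\times_B\XX \to \PP^5\times\PP^5$ is, over each stratum of $(\PP^5)^2$ given by the diagonal and its complement, a Zariski-locally trivial fibration with fibers open in affine spaces; one then applies the localization exact sequence for Chow groups to conclude that $\CH^*(\XX\times_B\XX)$ is generated over $\CH^*(B)$ by $H_1$, $H_2$ and $\Delta_{\XX/B}$. Your sketch of $n=1$ has the same flavor of being stated without being proved, though the single case is genuinely elementary.

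A second issue, more of a logical misstep than a gap: you invoke Theorem~\ref{thm:MCKcubic} (multiplicativity of the Chow--K\"unneth decomposition) to write $\Delta_X = \pi_X^0 + \pi_X^2 + \pi_X^4 + \pi_X^6 + \pi_X^8$. That identity is simply the definition of $\pi_X^4$ given in~\eqref{eq:CKcubic} and does not involve multiplicativity at all, which concerns the small diagonal in $X^3$. Apart from being irrelevant, citing Theorem~\ref{thm:MCKcubic} is dangerous: in \cite{FLV2} the Franchetta property for $X^2$ is an \emph{input} to the proof of the multiplicative Chow--K\"unneth decomposition, so relying on the latter here would introduce a circularity. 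Fortunately only the explicit definition~\eqref{eq:CKcubic} is actually used, so the circularity is not real, but the citation should be removed. Finally, the numerical independence of the products $h_{X,1}^a h_{X,2}^b \cdot \pi_X^4$ with the pure monomials does need a short verification (some of these products vanish for degree reasons, and $\pi_X^4$ has an algebraic part which overlaps with $h_{X,1}^2 h_{X,2}^2$); you assert it via K\"unneth orthogonality but do not carry it out.
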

\begin{proof}
This was established in \cite[Proposition~5.6]{FLV2}. The proof in \emph{loc.\ cit.}\ is given for $K=\C$ but holds for any field $K$.
\end{proof}

\begin{rmk}
Proposition~\ref{T:Franchetta} was extended to $n\leq 4$ in \cite[Theorem~2]{FLV3}. However, the cases $n=3$ and $n=4$ are not needed for the proof of Theorem~\ref{T:cubics} and, besides, their proofs are significantly more involved.
\end{rmk}

	\section{Kuznetsov components and primitive motives}\label{sec:Projectors}

	\subsection{Kuznetsov component and projectors}
	\label{subsec:KuznetsovComponent}
	For the basic theory of Fourier--Mukai transforms, we refer to the book \cite{MR2244106}.
	Let $X\subset \PP^{5}$ be a smooth cubic fourfold defined over a base field $K$. Following \cite{Kuz10}, the \emph{Kuznetsov component} $\AA_X$ of $X$ is defined to be the right-orthogonal complement of the triangulated subcategory generated by the exceptional collection $ \langle \mathcal O_X, \mathcal O_X(1), \mathcal O_X(2)\rangle$ in the bounded derived category of coherent sheaves $\D^b(X)$:
	$$\AA_{X}:=\{E\in \D^{b}(X)~\mid~ \Hom(\mathcal{O}_{X}(i), E[k])=0, \text{ for all } i=0, 1, 2 \text{ and } k\in \mathbb{Z}\}.$$ By Serre duality, $\AA_{X}$ is also the left-orthogonal complement
	 of the triangulated subcategory generated by the exceptional collection $ \langle \mathcal O_X(-3), \mathcal O_X(-2), \mathcal O_X(-1)\rangle$ in $\D^{b}(X)$:
	$$\AA_{X}=\{E\in \D^{b}(X)~\mid~ \Hom(E[k], \mathcal{O}_{X}(i))=0, \text{ for all } i=-1, -2, -3 \text{ and } k\in \mathbb{Z}\}.$$
	In other words, we have semi-orthogonal decompositions
	$$ \D^b(X) = \langle \AA_X, \mathcal O_X, \mathcal O_X(1), \mathcal O_X(2)\rangle \quad \text{and} \quad
	 \D^b(X) = \langle \mathcal O_X(-3), \mathcal O_X(-2), \mathcal O_X(-1), \AA_X\rangle.$$
	 
	 As is pointed out by Kuznetsov \cite{Kuz10} (see also \cite[Proposition 1.4]{Kuznetsov-LectureNotes}), $\AA_X$ is a K3-like category (or sometimes called a \textit{non-commutative K3 surface}), 
	 in the sense that its Serre functor $\mathbf{S}_{\AA_X}=[2]$ (see for example \cite{Kuznetsov-FractionalCY}) and  its Hochschild homology, which is $\operatorname{HH}_*(\AA_X)=K[2]\oplus K^{22}\oplus K[-2]$, agrees with the Hochschild homology of a K3 surface, at least when $\operatorname{char}(K)\neq 2$ or $3$. The latter, which will not be used in this work, can be established by using the additivity of Hochschild homology, the HKR isomorphism \cite{Yikutieli, AntieauVezzosi-HKR} applied to the cubic fourfold, and the computation of Hodge numbers of cubic fourfolds.

As $\AA_{X}$ is an admissible subcategory (\cite{MR992977, BondalKapranov}), the inclusion functor $i_X : \AA_X \hookrightarrow \D^b(X)$ has both left and right adjoint functors\,; these are  denoted by $i_X^*$ and $i_X^! \colon \D^b(X) \to \AA_X$, respectively.  
In addition, since $i_X$ is fully faithful, the adjunction morphisms 
$$
\begin{tikzcd}
i_X^* \circ i_X \arrow{r}{\simeq} & \mathrm{id}_{\AA_X}\arrow{r}{\simeq} & i_X^!\circ i_X
\end{tikzcd}
$$
are isomorphisms. 
We then have the following basic property.
\begin{prop}\label{P:proj}
The functors $p_X^L := i_X \circ i_X^*$ and $p_X^R:= i_X\circ i_X^!$ are idempotent endo-functors of $\D^b(X)$, that is, 
$$\left \lbrace \begin{aligned}
p_X^L \circ p_X^L \simeq p_X^L\,; \\
p_X^R \circ p_X^R \simeq p_X^R.
\end{aligned}\right.$$
Moreover, we have 
$$\left \lbrace \begin{aligned}
p_X^L \circ p_X^R \simeq p_X^R\,; \\
p_X^R \circ p_X^L \simeq p_X^L.
\end{aligned}\right.$$
\end{prop}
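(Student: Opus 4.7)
The plan is to use nothing beyond the two adjunction isomorphisms $i_X^* \circ i_X \simeq \mathrm{id}_{\AA_X} \simeq i_X^! \circ i_X$ stated immediately above Proposition~\ref{P:proj}. All four relations reduce to a single bracketing trick: in each composition, the middle two factors are of the form $i_X^? \circ i_X$ which collapses to the identity on $\AA_X$.

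Concretely, for the idempotence of $p_X^L$, I would write
$$
p_X^L \circ p_X^L \;=\; i_X \circ (i_X^* \circ i_X) \circ i_X^* \;\simeq\; i_X \circ \mathrm{id}_{\AA_X} \circ i_X^* \;=\; p_X^L,
$$
and the same manipulation with $i_X^!$ in place of $i_X^*$ gives $p_X^R \circ p_X^R \simeq p_X^R$. For the cross relations, the computation is identical:
$$
p_X^L \circ p_X^R \;=\; i_X \circ (i_X^* \circ i_X) \circ i_X^! \;\simeq\; i_X \circ i_X^! \;=\; p_X^R,
$$
and symmetrically
$$
p_X^R \circ p_X^L \;=\; i_X \circ (i_X^! \circ i_X) \circ i_X^* \;\simeq\; i_X \circ i_X^* \;=\; p_X^L.
$$

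There is no real obstacle here; the statement is a formal consequence of $i_X$ being fully faithful with both adjoints. The only thing worth noting is that the isomorphisms above are functorial in the obvious sense, so no coherence issue arises when we splice the adjunction unit/counit between the outer $i_X$ and $i_X^?$.
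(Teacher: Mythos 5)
Your proof is correct and is precisely the formal manipulation the paper leaves implicit: the proposition is stated without a separate proof, immediately after the paper records the adjunction isomorphisms $i_X^*\circ i_X\simeq\mathrm{id}_{\AA_X}\simeq i_X^!\circ i_X$, and your bracketing argument is exactly what fills that gap.
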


Note that $p_{X}^{L}$ and $p_{X}^{R}$ are mutation functors in the sense of Bondal \cite{MR992977}. More precisely,
$$p_{X}^{L}=\LL_{\langle\mathcal O_X, \mathcal O_X(1), \mathcal O_X(2)\rangle} = \LL_{\mathcal O_X}\circ\LL_{\mathcal O_X(1)}\circ\LL_{\mathcal O_X(2)}$$ is the left mutation through $\langle \mathcal O_X, \mathcal O_X(1), \mathcal O_X(2)\rangle $ and
 $$p_{X}^{R}=\RR_{\langle\mathcal O_X(-3), \mathcal O_X(-2), \mathcal O_X(-1)\rangle} = \RR_{\mathcal O_X(-1)}\circ\RR_{\mathcal O_X(-2)}\circ\RR_{\mathcal O_X(-3)}$$ is the right mutation through $\langle\mathcal O_X(-3), \mathcal O_X(-2), \mathcal O_X(-1)\rangle$.

We denote $\P_X^L$ and $\P_X^R$ the respective Fourier--Mukai kernels in $\D^b(X\times_K X)$ of the functors $p_X^L$ and $p_X^L$. 
Recall that, given $E \in \D^b(X)$ an exceptional object, the Fourier--Mukai kernel of the left mutation functor $\LL_{E}$ is given by $\mathrm{cone}\big(E^\vee \boxtimes E \to  \mathcal O_{\Delta}\big)$, while the Fourier--Mukai kernel of the right mutation functor $\RR_{E}$ is given by
$\mathrm{cone}\big(\mathcal O_{\Delta} \to \mathrm{R}{\mathscr Hom} (E,\omega_X[d]) \boxtimes E \big)[-1]$.
Here, $d$ is the dimension of $X$ and $E_{1}\boxtimes E_{2}:= p^{*}E_{1}\otimes q^{*}E_{2}$ with $p,q : X\times_K X \to X$ the two natural projections.
Therefore the Fourier--Mukai kernel of $p_{X}^{L}$ is given by the convolution of the kernels of the mutation functors\,:
\begin{equation}\label{eqn:FMKernelPL}
	\P_{X}^{L}\simeq \mathrm{cone}\big(\mathcal{O}_{X\times_K X} \to  \mathcal O_{\Delta}\big) \ast \mathrm{cone}\big(\mathcal{O}_{X}(-1) \boxtimes \mathcal{O}_{X}(1) \to  \mathcal O_{\Delta}\big) \ast \mathrm{cone}\big(\mathcal{O}_{X}(-2) \boxtimes \mathcal{O}_{X}(2) \to  \mathcal O_{\Delta}\big).
\end{equation}
The Fourier--Mukai kernel $\P_{X}^{R}$ of $p_{X}^{R}$ admits a similar description. 

\begin{rmk}\label{rmk:PLfamilywise}
	Consider the universal family of smooth cubic fourfolds  $\XX \to B$ as in Section~\ref{SS:MCK}.
	Since objects of the form $\mathcal O_X(i)$ are defined family-wise for $\XX \to B$, by the formula \eqref{eqn:FMKernelPL}, the Fourier--Mukai kernels $\P_X^L$ (and similarly $\P_X^R$) are defined family-wise.
\end{rmk} 

Now we turn to the study of cohomological  or Chow-theoretic Fourier--Mukai transforms. Recall that for $E\in \D^b(X)$, its \textit{Mukai vector} is defined as $v(E):= \operatorname{ch}(E)\sqrt{\operatorname{td}(T_X)} \in \CH^*(X)$, and we denote its cohomology class by $[v(E)]\in \HH^*(X)$ and its numerical class by $\bar{v}(E)\in \overline{\CH}^*(X)$, where $\overline{\CH}^*(X):=\CH^*(X)/\!\!\equiv$ is the $\mathbb{Q}$-algebra of cycles on $X$ modulo numerical equivalence. The \textit{Mukai pairing} on $\CH^*(X)$ is given as follows: for any $v, v'\in \CH^*(X)$, 
\begin{equation}\label{E:Mukai}
\langle v, v'\rangle:=\int_{X} v^{\vee}\cdot v'\cdot\exp(c_{1}(X)/2),
\end{equation}
where $v^\vee:=\sum_{i=0}^{\dim X} (-1)^i v_i$, where $v_i\in \CH^i(X)$ is the codimension $i$ component of $v$. The same formula defines the Mukai pairing on $\HH^*(X)$ and $\overline{\CH}^*(X)$. Note that the Mukai pairing is bilinear but in general \textit{not} symmetric, hence we need to distinguish between the notions of left and right orthogonal complements. Recall that for a vector space $V$ equipped with a bilinear form $\langle -, - \rangle$, the left (resp.~right) orthogonal complement of a subspace $U$ is by definition ${}^\perp U:=\{v\in V~|~ \langle v, u \rangle=0, \text{ for all  } u\in U\}$, resp.~$ U^{\perp}:=\{v\in V~|~ \langle u, v \rangle=0, \text{ for all  } u\in U\}$. When the bilinear form is non-degenerate, we define the  \textit{orthogonal projection} from $V$ onto ${}^\perp U$ (resp.~$U^\perp$) as the projection with respect to the decomposition $V=U\oplus {}^\perp U$ (resp.~$V=U^\perp\oplus U$).
\begin{lem}\label{lemma:ProjectorPL}
	The cohomological (resp.~numerical) Fourier--Mukai transform 
	\[[v(\P_X^L)]_*: \HH^*(X)\to \HH^*(X),\]
	\[\bar{v}(\P_X^L)_*: \overline{\CH}^*(X)\to \overline{\CH}^*(X)\]
	are respectively the orthogonal projections onto $\langle v(\mathcal{O}),  v(\mathcal{O}(1)),  v(\mathcal{O}(2))\rangle^\perp$, which is the right orthogonal complement of the linear subspace spanned by the cohomological (resp.~numerical) Mukai vectors of $\mathcal{O}_{X}, \mathcal{O}_{X}(1)$, and $\mathcal{O}_{X}(2)$, with respect to the Mukai pairing.
\end{lem}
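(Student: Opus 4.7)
The plan is to combine an explicit computation of the cohomological Fourier--Mukai transform of each elementary left-mutation kernel with a basic linear-algebra fact that recognizes orthogonal projections among idempotents. The essential preliminary is the Hirzebruch--Riemann--Roch identity $\chi(E,F)=\langle v(E),v(F)\rangle$, which holds with the Mukai pairing as defined in \eqref{E:Mukai} precisely because the correction factor $\exp(c_1(X)/2)$ compensates for the identity $\sqrt{\mathrm{td}(X)}{}^\vee = \sqrt{\mathrm{td}(X)}\cdot\exp(-c_1(X)/2)$ on the Fano fourfold $X$; I would record this identity first.

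Next I would compute the transform associated to a single mutation kernel $\P_j := \mathrm{cone}(\mathcal{O}_X(-j)\boxtimes \mathcal{O}_X(j)\to \mathcal{O}_\Delta)$. Since the Mukai vector is additive on distinguished triangles and satisfies $v(E_1\boxtimes E_2)=p^*v(E_1)\cdot q^*v(E_2)$, the projection formula combined with the HRR identity above should yield
$$[v(\P_j)]_\ast(\alpha) \;=\; \alpha \;-\; \langle v(\mathcal{O}_X(j)),\alpha\rangle\, v(\mathcal{O}_X(j)),$$
which is manifestly the orthogonal projection onto $\{v(\mathcal{O}_X(j))\}^\perp$ along $\langle v(\mathcal{O}_X(j))\rangle$. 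Via the explicit formula \eqref{eqn:FMKernelPL} and the functoriality of cohomological Fourier--Mukai transforms under convolution of kernels, $[v(\P_X^L)]_\ast$ is then the triple composition of these elementary mutations, applied in the order $j=2$, $j=1$, $j=0$.

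Then I would verify, using the semi-orthogonality relation $\chi(\mathcal{O}_X(i),\mathcal{O}_X(j))=0$ for $i>j$, that this triple composition kills every $v(\mathcal{O}_X(i))$ for $i=0,1,2$ and sends all of $\HH^*(X)$ into the right orthogonal $U^\perp$, where $U:=\langle v(\mathcal{O}_X),v(\mathcal{O}_X(1)),v(\mathcal{O}_X(2))\rangle$; idempotency is automatic from Proposition~\ref{P:proj} together with the functoriality of cohomological transforms under convolution. I would finish by the linear-algebra fact that the Gram matrix $(\chi(\mathcal{O}_X(i),\mathcal{O}_X(j)))_{i,j=0,1,2}$ is upper triangular with $1$'s on the diagonal, and hence non-degenerate, so that $\HH^*(X)=U\oplus U^\perp$. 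Any idempotent endomorphism whose kernel contains $U$ and whose image is contained in $U^\perp$ is then forced to be the projection onto $U^\perp$ along $U$, which is exactly the assertion. The same argument applies verbatim to $\overline{\CH}\,^*(X)$, since the Mukai vector descends to numerical equivalence and all of the computations above are purely formal.

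The main obstacle will be the bookkeeping in the first step: the correction factor $\exp(c_1(X)/2)$ must be tracked carefully so that $[v(\P_j)]_\ast(\alpha)$ involves the Mukai pairing with $v(\mathcal{O}_X(j))$ in the \emph{first} slot, which is what produces a \emph{right} orthogonal projection compatible with the definition of $\AA_X$ as a right orthogonal. Were the pairing symmetric (as for K3 surfaces) this distinction would be invisible, but on the Fano cubic fourfold the asymmetry of the Mukai pairing must be handled with some care to identify the correct orthogonal on which the map projects.
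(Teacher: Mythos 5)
Your proposal is correct and follows essentially the same route as the paper: both compute, from the explicit formula \eqref{eqn:FMKernelPL}, that a single left mutation kernel acts on cohomology by $\alpha\mapsto\alpha-\langle v(E),\alpha\rangle v(E)$ (via $v(E^\vee)=v(E)^\vee\exp(c_1(X)/2)$), and both then compose three such maps and use the semi-orthogonality $\chi(\mathcal O_X(i),\mathcal O_X(j))=0$ for $i>j$ to identify the composite with the projection onto the joint right orthogonal. The only divergence is your finishing step: you route through idempotency (via Proposition~\ref{P:proj}) plus the Gram matrix being unitriangular to pin down the projection, whereas the paper concludes directly that the composition of the three elementary projections is the projection onto $U^\perp$; your detour through idempotency is dispensable, since the composite already visibly fixes $U^\perp$, annihilates $U$, and maps into $U^\perp$, which together with $\HH^*(X)=U\oplus U^\perp$ already forces it to be the projection.
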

\begin{proof}
	We only show the statement for the cohomology. The proof for $\overline{\CH}^*$ is the same.
We first show a general result\,: for a smooth projective variety $X$ and an exceptional object $E$ in $\D^b(X)$, the cohomological action of the left mutation functor $\LL_E$ on $\HH^*(X)$ is the orthogonal projection onto the subspace $[v(E)]^\perp$, with respect to the Mukai pairing. Indeed, the Fourier--Mukai kernel of $\LL_E$, denoted by $F\in \D^b(X\times X)$, fits into the distinguished triangle:
\[E^\vee\boxtimes E\to \mathcal{O}_\Delta\to F\xrightarrow{+1}.\]
Hence $v(F)=v(\mathcal{O}_\Delta)-v(E^\vee\boxtimes E)=\Delta_X-v(E^\vee)\times v(E)$. Thus, for any $\alpha\in \HH^*(X)$, 
\[[v(F)]_*(\alpha)=\Delta_{X,*}(\alpha)-\left(\int_X[v(E^\vee)]\smile \alpha\right)[v(E)]=\alpha-\langle [v(E)], \alpha\rangle [v(E)],\]
which is exactly the orthogonal projector to $[v(E)]^\perp$,
where we used in the last step the relation $v(E^\vee)=v(E)^\vee\smile \exp(c_1(X)/2)$\,; see \cite[Lemma 5.41]{MR2244106}. Now back to the case of cubic fourfolds\,: since $\P_X^L$ is the composition of the kernels of three left mutations \eqref{eqn:FMKernelPL}, applying the above general result three times, we see that the cohomological transform $[v(\P_X^L)]_*$ on $\HH^*(X)$ is the successive orthogonal projections onto $[v(\mathcal{O}_X(2))]^\perp$, $[v(\mathcal{O}_X(1))]^\perp$ and $[v(\mathcal{O}_X)]^\perp$. Since  $\langle[v(\mathcal{O}_X(i))], [v(\mathcal{O}_X(j))] \rangle=0$ for all $0\leq j<i\leq 2$,  the composition of the three projections is the orthogonal projection onto  $\langle [v(\mathcal{O}_X)],  [v(\mathcal{O}_X(1))],  [v(\mathcal{O}_X(2))]\rangle^\perp$.
\end{proof}

\begin{defn} The cohomology and the Chow group modulo numerical equivalence of the Kuznetsov component $\mathcal A_X$ are defined, respectively, as the vector spaces
	\[\HH(\mathcal A_X) := \operatorname{Im} \Big([v(\P_X^L)]_*: \HH^*(X)\to \HH^*(X)\Big),\]
\[\overline{\CH}(\mathcal A_X) := \operatorname{Im}\Big(\bar{v}(\P_X^L)_*: \overline{\CH}^*(X)\to \overline{\CH}^*(X)\Big)  =\{\bar{v}(E)~|~E\in \mathcal{A}_X\}.\]
Unlike the Mukai pairing on $\HH^*(X)$ or $\overline{\CH}^*(X)$,  
the restriction of the Mukai pairing to the above spaces becomes symmetric. This holds essentially because the Serre functor 
$\mathbf{S}_{\mathcal{A}_X}$ of $\mathcal{A}_X$ is the double shift: $\langle\bar{v}(E), \bar{v}(E')\rangle=\chi(E, E')=\chi(E', \mathbf{S}_{\mathcal{A}}E)=\chi(E', E)=\langle\bar{v}(E'), \bar{v}(E)\rangle$, see \cite[pp.1891-1892]{MR3229044}. This can also be checked directly by applying the Mukai pairing to the projections of two vectors. Thus the Mukai pairing endows both $\HH(\AA_X)$ and $\overline{\CH}(\AA_X)$ with a non-degenerate quadratic form.
\end{defn}

	\subsection{Kuznetsov components and primitive classes} 
	
	\begin{defn}
Let $X$ be a smooth cubic fourfold with hyperplane class $h_X$.  The primitive cohomology and the primitive Chow group modulo numerical equivalence of $X$ are defined, respectively, to be  $$\HH^4_{\prim} (X) := \langle h_X^2 \rangle ^\perp \subseteq \HH^4(X),$$
$$\overline{\CH}\,^2_{\prim}(X) := \langle h_X^2 \rangle ^\perp \subseteq \overline{\CH}\,^2(X).$$
Here, $\langle h_X^2 \rangle ^\perp$ denotes the orthogonal complement of $h_X^2$ inside $\HH^4(X)$ with respect to the intersection product. 	We also have the following alternative description for the space of primitive classes as the right orthogonal complement of all powers of the hyperplane class\,:
$$\HH^4_{\prim}(X) = \langle \1_X, h_X, h_X^2, h_X^3, h_X^4 \rangle^\perp\subset \overline \HH^*(X),$$
$$\overline{\CH}\,^2_{\prim}(X) = \langle \1_X, h_X, h_X^2, h_X^3, h_X^4 \rangle^\perp\subset \overline{\CH}\,^*(X).$$
The restriction of the Mukai pairing on $\HH^4_{\prim} (X)$ and on $\overline{\CH}\,^2_{\prim}(X)$ endows those spaces with a non-degenerate quadratic form that coincides with the intersection pairing. (As can readily be observed from~\eqref{E:Mukai}, the Mukai pairing and the intersection pairing already agree on $\HH^4(X)$ and on $\overline{\CH}\,^2(X)$.) 
	\end{defn}

	\begin{prop}\label{P:HprimKuz}
We have the inclusions\,:
	 \begin{equation}\label{eqn:HprimKuz}
\HH^{4}_{\prim}(X)\subset \HH(\mathcal A_X),
\end{equation}
 \begin{equation}\label{eqn:HprimKuz2}
\overline{\CH}\,^2_{\prim}(X) \subset \overline{\CH}(\mathcal A_X).
\end{equation}
	\end{prop}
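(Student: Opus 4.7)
My plan is to check directly that any primitive class $\alpha$ lies in the right orthogonal (with respect to the Mukai pairing) of the subspace spanned by $v(\mathcal O_X)$, $v(\mathcal O_X(1))$, and $v(\mathcal O_X(2))$; by Lemma~\ref{lemma:ProjectorPL}, this is exactly the condition of belonging to $\HH^*(\AA_X)$, respectively $\overline{\CH}\,^*(\AA_X)$. The two inclusions will follow from the same formal computation, so I will treat them in parallel, letting $\alpha$ denote either a class in $\HH^4_{\prim}(X)$ or in $\overline{\CH}\,^2_{\prim}(X)$.

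The first step is to observe that $v(\mathcal O_X(i)) = e^{ih_X}\sqrt{\operatorname{td}(T_X)}$ lies in $\Q[h_X]$ for $i=0,1,2$. This is immediate from the normal bundle sequence $0\to T_X \to T_{\PP^5}|_X \to \mathcal O_X(3)\to 0$, which expresses all Chern classes of $T_X$, and hence $\operatorname{td}(T_X)$ and its square root, as polynomials in $h_X$. Similarly, since $c_1(X) = 3h_X$, the factor $\exp(c_1(X)/2)$ appearing in the Mukai pairing~\eqref{E:Mukai} is also a polynomial in $h_X$.

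Consequently, for each $i\in\{0,1,2\}$, the Mukai pairing unfolds as
$$\langle v(\mathcal O_X(i)),\,\alpha\rangle \;=\; \int_X v(\mathcal O_X(i))^\vee \cdot \exp(c_1(X)/2) \cdot \alpha \;=\; \int_X P_i(h_X)\cdot \alpha$$
for some $P_i\in\Q[h_X]$. Since $\alpha$ has codimension $2$ on the four-dimensional variety $X$, only the codimension-$2$ component of $P_i(h_X)$—necessarily a rational multiple $c_i\, h_X^2$—contributes to the integral. Hence
$$\langle v(\mathcal O_X(i)),\,\alpha\rangle \;=\; c_i \int_X h_X^2 \cdot \alpha \;=\; 0,$$
the last equality holding by the very definition of primitivity ($\alpha \in \langle h_X^2\rangle^\perp$ with respect to the intersection pairing). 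This establishes both inclusions~\eqref{eqn:HprimKuz} and~\eqref{eqn:HprimKuz2}.

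I do not anticipate any genuine obstacle here: once one recognizes that everything in the Mukai pairing other than $\alpha$ is a polynomial in $h_X$, the primitivity condition does all the work. The only minor point of care is the asymmetry of the Mukai pairing, which forces one to place $v(\mathcal O_X(i))$ on the \emph{left} when computing the right orthogonal; the computation above respects this order.
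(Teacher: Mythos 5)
Your proof is correct and takes essentially the same approach as the paper: both reduce via Lemma~\ref{lemma:ProjectorPL} to checking right orthogonality to $v(\mathcal O_X(i))$ for $i=0,1,2$, and both use the observation that these Mukai vectors and $\exp(c_1(X)/2)$ are polynomials in $h_X$, so that the pairing against a primitive class reduces to $\int_X (\text{rational multiple of } h_X^2)\cdot\alpha = 0$. Your added detail on why the Chern classes of $T_X$ lie in $\Q[h_X]$ (via the normal bundle sequence) and your parallel treatment of the cohomological and numerical cases are just minor expository differences.
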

\begin{proof}
	We only prove~\eqref{eqn:HprimKuz} as the proof of~\eqref{eqn:HprimKuz2} is similar.
By Lemma \ref{lemma:ProjectorPL}, the right-hand side of~\eqref{eqn:HprimKuz} coincides with the right orthogonal complement of the Mukai vectors of $\mathcal{O}_{X}, \mathcal{O}_{X}(1)$, and $\mathcal{O}_{X}(2)$, with respect to the Mukai pairing on $\HH^{*}(X)$. Therefore, it suffices to check that $\HH^4_{\prim}(X)$ is right orthogonal to $[v(\mathcal{O}_{X})]$, $[v(\mathcal{O}_{X}(1))]$ and $[v(\mathcal{O}_{X}(2))]$. As the Mukai vector of the sheaf $\mathcal{O}_{X}(i)$ and $\exp(c_{1}(X)/2)$ are all polynomials in the hyperplane section class $h_X$, we have that for any $i$ there is some rational number $\lambda_{i}$ such that $$\langle  [v(\mathcal{O}_{X}(i))], \alpha\rangle=\int_{X}\alpha\smile \lambda_{i}h_X^{2}=0, \quad \forall \alpha\in \HH^{4}_{\prim}(X).$$ 
 The inclusion \eqref{eqn:HprimKuz} is proved. 
\end{proof}

\begin{rmk} 
	Over the complex numbers ($K=\mathbb{C}$), following Addington--Thomas \cite{MR3229044}, define the \emph{Mukai lattice} of $\AA_{X}$ as its topological K-theory\,:
	$$\widetilde\HH(\AA_{X}, \Z):=K_{\operatorname{top}}(\AA_{X}):=\{\alpha\in K_{\operatorname{top}}(X)~\mid~ \langle[\mathcal{O}_{X}(i)], \alpha\rangle=0 \text{ for } i=0, 1, 2\},$$
	where $\langle -, - \rangle$ is the Mukai pairing on $K_{\operatorname{top}}(X)$ given by $\langle v, v'\rangle:=\chi(v, v')$. A weight-2 Hodge structure on $\widetilde\HH(\AA_{X}, \Z)$ is induced from the isomorphism $v: K_{\operatorname{top}}(X)\otimes \Q\to \HH^{*}(X, \Q)$ given by the Mukai vector. The cohomological action of the projector $\P_{X}^{L}$ recovers the Mukai lattice rationally\,: 
	\begin{equation}\label{E:Htilde}
	\widetilde\HH(\AA_{X}, \Q)=\operatorname{Im}\left([v(\P_{X}^{L})]_{*}\colon \HH^{*}(X, \Q)\to \HH^{*}(X, \Q)\right).
	\end{equation}
	Hence Proposition \ref{P:HprimKuz} says that $\HH^{4}_{\prim}(X,\Q)\subset \widetilde\HH(\AA_{X}, \Q)$. See  \cite[Proposition 2.3]{MR3229044} for an alternative argument. 
\end{rmk}

The following relation between $\overline\CH^2_{\prim}(X)$ and $\overline{\CH}(\mathcal{A}_X)$ is essentially due to Addington--Thomas   \cite[Proposition 2.3]{MR3229044}.
\begin{prop}\label{P:num}
	There are canonical polynomials $\lambda_1$, $\lambda_2 \in \mathbb{Q}[T]$ such that we have orthogonal decompositions
	 \begin{equation}\label{eqn:HprimKuz3}
	\langle \lambda_1([h_X]),\lambda_2([h_X])\rangle \oPerp \HH^{4}_{\prim}(X) =  \HH(\mathcal A_X),
	\end{equation}
	\begin{equation}\label{eqn:HprimKuz4}
		\langle \lambda_1(h_X),\lambda_2(h_X)\rangle \oPerp \overline{\CH}\,^2_{\prim}(X) = \overline{\CH}(\mathcal A_X).
	\end{equation}
	with respect to (the restriction of) the Mukai pairing~\eqref{E:Mukai}. Moreover, the $\Z$-lattice 	$\langle \lambda_1(h_X),\lambda_2(h_X)\rangle$ equipped with the Mukai pairing is an $A_2$-lattice.
\end{prop}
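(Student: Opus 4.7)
The plan is to split $\HH^*(X)$ and $\overline{\CH}\,^*(X)$ as orthogonal sums of a ``polynomial-in-$h_X$'' part and a primitive part, then to identify $\HH^*(\AA_X)$ and $\overline{\CH}\,^*(\AA_X)$ inside these decompositions, and finally to produce canonical generators of the non-primitive summand realizing the $A_2$-lattice. I would work out the cohomological case in detail, the numerical Chow case being entirely analogous (using that $\overline{\CH}\,^i(X) = \Q\cdot h_X^i$ for $i\neq 2$, which follows from $\operatorname{Pic}(X) = \Z h_X$ by the Lefschetz hyperplane theorem and its dual statement for $1$-cycles modulo numerical equivalence).

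First, set $V := \langle \1_X, h_X, h_X^2, h_X^3, h_X^4\rangle \subseteq \HH^*(X)$. Since odd-degree cohomology of a cubic fourfold vanishes, $\HH^*(X) = V \oplus \HH^4_{\prim}(X)$ as a vector space. The two summands are orthogonal for both the intersection pairing and the Mukai pairing~\eqref{E:Mukai}\,: for $\alpha \in V$ and $\beta \in \HH^4_{\prim}(X)$, the top-degree component of $\alpha^\vee \cdot \beta \cdot \exp(c_1(X)/2)$ is a scalar multiple of $h_X^2 \cdot \beta$, which integrates to zero by primitivity of $\beta$. Moreover, the Mukai pairing restricted to $V$ is non-degenerate, since it differs from the (non-degenerate) intersection pairing by composition with the automorphisms $\alpha\mapsto \alpha^\vee$ and $\beta\mapsto \beta\cdot e^{3h_X/2}$ of $V$.

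Second, each $v(\mathcal O_X(i))$ for $i=0,1,2$ lies in $V$ (since $e^{i h_X}\sqrt{\operatorname{td}(T_X)} \in \Q[h_X]$), and a Vandermonde argument on the top-degree coefficients $i^4/24$ shows that these three Mukai vectors are linearly independent. By Lemma~\ref{lemma:ProjectorPL}, $\HH^*(\AA_X)$ is their Mukai right-orthogonal. Combining this with the decomposition of the previous paragraph and the inclusion $\HH^4_{\prim}(X) \subseteq \HH^*(\AA_X)$ of Proposition~\ref{P:HprimKuz}, one obtains
$$\HH^*(\AA_X) = K \oPerp \HH^4_{\prim}(X),$$
where $K\subset V$ is the Mukai right-orthogonal, inside $V$, of $\langle v(\mathcal O_X(0)), v(\mathcal O_X(1)), v(\mathcal O_X(2))\rangle$. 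By non-degeneracy of the Mukai pairing on $V$, we get $\dim K = 5-3 = 2$.

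Finally, to exhibit canonical generators of $K$ realizing the $A_2$-structure, I would follow \cite[Proposition~2.3]{MR3229044}\,: start from the numerical (or topological K-theoretic) classes of the structure sheaf of a line $L\subset X$ and of its twist by $\mathcal O_X(1)$ (whose Mukai vectors are polynomials in $h_X$ by Grothendieck--Riemann--Roch applied to $i_L : L\hookrightarrow X$), then apply a Gram--Schmidt correction using the $v(\mathcal O_X(i))$ to land inside $K$, yielding explicit polynomials $\lambda_1(T), \lambda_2(T) \in \Q[T]$. The main obstacle is the ensuing Gram-matrix computation, checking that $\langle \lambda_i(h_X), \lambda_j(h_X)\rangle$ produces the Gram matrix $\bigl(\begin{smallmatrix} 2 & -1\\ -1 & 2\end{smallmatrix}\bigr)$ of the $A_2$-lattice. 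Over $\C$ this is exactly the computation performed in \cite[Proposition~2.3]{MR3229044}\,; in our setting it goes through verbatim, since the entire computation takes place in $\Q[h_X]/(h_X^5)$ with structure constants $\int_X h_X^4 = 3$, $c_1(X) = 3h_X$, and the Todd-class polynomial of $T_X$, all of which are independent of the base field $K$.
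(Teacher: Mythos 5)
Your proof is correct and follows essentially the same route as the paper: identify $\HH^*(\AA_X)$ (resp.\ $\overline{\CH}\,^*(\AA_X)$) as the right Mukai-orthogonal of the span of the $v(\mathcal O_X(i))$ via Lemma~\ref{lemma:ProjectorPL}, invoke the inclusion of Proposition~\ref{P:HprimKuz} and the Mukai-orthogonality of $\langle h_X^j\rangle$ with the primitive part, and defer to Addington--Thomas/Huybrechts for the explicit generators $\lambda_1,\lambda_2$ and their Gram matrix. The only substantive difference is one of bookkeeping: you deduce $\dim K = 2$ from non-degeneracy of the Mukai pairing on $V=\langle\1_X,\dots,h_X^4\rangle$, whereas the paper verifies directly that $\langle \bar v(\mathcal O_X),\bar v(\mathcal O_X(1)),\bar v(\mathcal O_X(2)),\lambda_1,\lambda_2\rangle = \langle\1_X,\dots,h_X^4\rangle$; and a minor sign point: with the Mukai pairing $\langle v,v'\rangle=\int_X v^\vee\cdot v'\cdot e^{c_1/2}$ as in~\eqref{E:Mukai} one finds $\lambda_i^2=-2$ and $\langle\lambda_1,\lambda_2\rangle=1$, so the Gram matrix is $\bigl(\begin{smallmatrix}-2 & 1\\ 1 & -2\end{smallmatrix}\bigr)$ rather than $\bigl(\begin{smallmatrix}2 & -1\\ -1 & 2\end{smallmatrix}\bigr)$.
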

\begin{proof} The decomposition
	\eqref{eqn:HprimKuz3} is established in \cite[Proposition 2.3]{MR3229044}. 
	We sketch the proof of~\eqref{eqn:HprimKuz4} for the convenience of the reader. 
	We define the polynomials (see \cite[pp.~176-177]{Huy2019Lecture})
	\begin{align*}
	\lambda_1&=3 +\frac{5}{4}T-\frac{7}{32}T^2-\frac{77}{384}T^3+\frac{41}{2048}T^4\,;\\
	\lambda_2&=-3-\frac{1}{4}T+\frac{15}{32}T^2+\frac{1}{384}T^3-\frac{153}{2048} T^4.
	\end{align*}
	We write $\lambda_i$ for $\lambda_i(h_X)$ in the sequel\,; $\lambda_i$ clearly defines an algebraic cycle defined over $K$. Let us mention that, geometrically (after a finite base-change),
	$\lambda_i$ agrees with the Mukai vector of $p^L_X(\mathcal{O}_{l}(i))$, where $l$ is any line contained in $X$.
	It is easy to compute that $\lambda_1^2=\lambda_2^2=-2$ and $\langle \lambda_1, \lambda_2\rangle=1$. Now for any element in $\overline{\CH}(\mathcal{A}_X)$, which is necessarily of the form $\bar{v}(E)$ for some $E\in \mathcal{A}_X$, the condition that $\langle\lambda_1, \lambda_2\rangle \perp \bar{v}(E)$ is equivalent to $\bar{v}(E)$ being right orthogonal in $\overline{\CH}^*(X)$ to
	\begin{align*}
	&\langle\bar{v}(\mathcal{O}_X), \bar{v}(\mathcal{O}_X(1)), \bar{v}(\mathcal{O}_X(2)), \lambda_1, \lambda_2\rangle\\
	=&\langle\bar{v}(\mathcal{O}_X), \bar{v}(\mathcal{O}_X(1)), \bar{v}(\mathcal{O}_X(2)), \bar{v}(\mathcal{O}_X(3)), \bar{v}(\mathcal{O}_X(4))\rangle\\
	=&\langle \1_X, h_X, h_X^2, h_X^3, h_X^4 \rangle.
	\end{align*} 
	However, $\langle \1_X, h_X, h_X^2, h_X^3, h_X^4 \rangle^\perp=\CH^2_{\prim}(X)$.	
\end{proof}
	
	\subsection{Kuznetsov components and primitive motives}\label{SS:Kprim}
	Let $\XX \to B$ be the universal family of smooth cubic fourfolds. We may refine the relative Chow--K\"unneth decomposition \eqref{eq:CKcubic} and define the relative idempotent correspondence
$$\pi^4_{\XX, \prim} : = \pi^4_{\XX} - \frac{1}{3} H^2\times_B H^2.$$ We have $\pi^4_{\XX, \prim} \circ \pi^4_{\XX} = \pi^4_{\XX} \circ \pi^4_{\XX, \prim} = \pi^4_{\XX, \prim}$ and 
the restriction of $\pi^4_{\XX, \prim}$ to any fiber~$X$ defines an idempotent $\pi^4_{\prim} \in \CH^4(X\times_K X)$ which cohomologically defines the orthogonal projector on the primitive cohomology $\HH^4_{\prim} (X) $.
\medskip

	Using the Franchetta property for $X\times X$ of Proposition~\ref{T:Franchetta}, we can show that the Fourier--Mukai kernels $\P_X^L$ and $\P_X^R$ enjoy the following property relatively to the projector~$\pi^4_{\prim} $. For an object $\mathcal{F}\in \D^{b}(X\times X)$, we denote by $v(\mathcal{F}):=\operatorname{ch}(\mathcal{F})\cdot \sqrt{\operatorname{td}(X\times X)}$ its Mukai vector and $v_{i}(\mathcal{F})$ the component of $v(\mathcal{F})$ in $\CH^{i}(X\times X)$, for all $0\leq i\leq 8$.
	
	\begin{lem} \label{L:key}
		The following relations hold in $\CH^4(X\times X)$\,:
		$$\pi^4_{\prim} \circ v_4(\P_X^L) \circ \pi^4_{\prim} = \pi^4_{\prim} \quad  \text{and} \quad \pi^4_{\prim} \circ v_4(\P_X^R) \circ \pi^4_{\prim} = \pi^4_{\prim}.$$
	\end{lem}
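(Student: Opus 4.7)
The plan is to use the Franchetta property for $\XX^2_{/B}$ (Proposition~\ref{T:Franchetta}) to reduce both claimed identities to cohomological statements, and then to verify the cohomological statements using Lemma~\ref{lemma:ProjectorPL} together with Proposition~\ref{P:HprimKuz}.

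First I would check that the two cycles $\pi^4_{\prim} \circ v_4(\P_X^L) \circ \pi^4_{\prim} - \pi^4_{\prim}$ and its analogue with $\P_X^R$ in place of $\P_X^L$ both lie in $\operatorname{GDCH}^4_B(X\times X)$. The projector $\pi^4_{\prim}$ is by construction the fiberwise restriction of $\pi^4_{\XX,\prim}$, hence generically defined. By Remark~\ref{rmk:PLfamilywise}, the Fourier--Mukai kernels $\P_X^L$ and $\P_X^R$ are themselves defined family-wise over $B$, so their Mukai vectors — and in particular their codimension-$4$ components $v_4(\P_X^L)$ and $v_4(\P_X^R)$ — are generically defined cycles on $X\times X$. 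Since $\operatorname{GDCH}^*_B(X\times X)$ is closed under composition of correspondences, both differences lie in $\operatorname{GDCH}^4_B(X\times X)$. By Proposition~\ref{T:Franchetta}, it then suffices to show these differences vanish numerically; since cohomological triviality implies numerical triviality, it is enough to establish the two identities cohomologically.

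Via K\"unneth and Poincar\'e duality, a class in $\CH^4(X\times X)$ is cohomologically trivial if and only if it induces the zero endomorphism of $\HH^*(X)$, so I would show $[\pi^4_{\prim} \circ v_4(\P_X^L) \circ \pi^4_{\prim}]_* = [\pi^4_{\prim}]_*$ as operators on $\HH^*(X)$ (and similarly for $\P_X^R$). The operator $[\pi^4_{\prim}]_*$ is the orthogonal projection onto $\HH^4_{\prim}(X)$. By Lemma~\ref{lemma:ProjectorPL}, $[v(\P_X^L)]_*$ is an idempotent whose image is $\HH^*(\AA_X)$, which contains $\HH^4_{\prim}(X)$ by Proposition~\ref{P:HprimKuz}, so $[v(\P_X^L)]_*$ acts as the identity on $\HH^4_{\prim}(X)$. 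Now a codimension-$i$ correspondence on $X\times X$ shifts cohomological degrees on $X$ by $2i-8$, so among the components $v_i(\P_X^L)$ only $i=4$ contributes to the $\HH^4\to\HH^4$ part of this action; hence $[v_4(\P_X^L)]_*$ also acts as the identity on $\HH^4_{\prim}(X)$, and composing on both sides with $[\pi^4_{\prim}]_*$ yields the desired cohomological identity. The same argument covers $\P_X^R$: the relations $p_X^L\circ p_X^R \simeq p_X^R$ and $p_X^R\circ p_X^L\simeq p_X^L$ from Proposition~\ref{P:proj} force the fixed-point subspace of the idempotent $[v(\P_X^R)]_*$ to coincide with that of $[v(\P_X^L)]_*$, namely $\HH^*(\AA_X)$, so $[v(\P_X^R)]_*$ also acts as the identity on $\HH^4_{\prim}(X)$.

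I do not foresee a substantial obstacle, since all the essential inputs — the Franchetta property for $\XX^2_{/B}$, the explicit description of $[v(\P_X^L)]_*$ as an orthogonal projector, and the inclusion $\HH^4_{\prim}(X)\subset \HH^*(\AA_X)$ — are already in place. The only minor subtlety is the degree-counting argument that isolates the codimension-$4$ component $v_4(\P_X^L)$ from the full Mukai vector when computing the action on $\HH^4(X)$.
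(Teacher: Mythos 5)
Your proposal is correct and follows essentially the same strategy as the paper's proof: use the family-wise definition of $\pi^4_{\prim}$ and $\P_X^L$ together with the Franchetta property (Proposition~\ref{T:Franchetta}) to reduce to a cohomological identity, then invoke the inclusion $\HH^4_{\prim}(X) \subset \HH^*(\AA_X)$ from Proposition~\ref{P:HprimKuz}. The paper leaves the cohomological verification as ``follows directly from Proposition~\ref{P:HprimKuz}''; you usefully spell out the degree-counting step that isolates $v_4$ from the full Mukai vector and the argument via Proposition~\ref{P:proj} that handles $\P_X^R$ without a separate orthogonal-projector computation.
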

\begin{proof} We only prove the relation involving $\P_X^L$\,; the proof of the relation involving $\P_X^R$ is similar. 
We have to show	that the composition 
	\begin{equation}\label{eqn:compositionId}
	 \begin{tikzcd}
\h^4_{\prim}(X)  \arrow[hook]{r} & \h(X) \arrow{r}{v(\P_X^L)} & \bigoplus_i \h(X)(i) \arrow[twoheadrightarrow]{r} &\h^4_{\prim}(X)
	\end{tikzcd}
	\end{equation}
	 is the identity map.
	 Observe that $\pi^4_{\prim}$ is defined family-wise (which is the reason for focusing on $\pi^4_{\prim}$, rather than on $\pi^4_{\tr}$, in this section) and the Fourier--Mukai kernel $\P_X^L$ is also defined family-wise (Remark \ref{rmk:PLfamilywise}), by the Franchetta property for $X\times_K X$ in Proposition~\ref{T:Franchetta}, we are reduced to showing that the composition \eqref{eqn:compositionId} is the identity map modulo homological (or numerical) equivalence.
This follows directly from Proposition~\ref{P:HprimKuz}.
	\end{proof}

\begin{rmk}
	It is maybe possible to prove Lemma~\ref{L:key} by a direct but tedious computation without using the Franchetta property. We leave the details to the interested reader.
\end{rmk}

	\section{Equivalent Kuznetsov components and transcendental motives}
	\label{S:FM-tr}
	
	\subsection{Rational and numerical equivalence on codimension-2 cycles on cubic fourfolds} 
Recall that a \emph{universal domain} is an algebraically closed field of infinite transcendence degree over its prime subfield.
The following lemma applies in particular to cubic fourfolds\,:
	\begin{lem}\label{L:ratnum}
		Let $X$ be a smooth projective variety over a field $K$ and let $\Omega$ be a universal domain containing $K$. Assume that $\CH_0(X_\Omega)$ is supported on a curve and that  $\HH^3(X_{\bar K},\Q_\ell)=0$ for some prime $\ell \neq \operatorname{char} K$. Then rational and numerical equivalence agree on $\mathrm{Z}^2(X)$, where $\mathrm{Z}^2$ denotes the group of algebraic cycles of codimension 2 with rational coefficients. 
	\end{lem}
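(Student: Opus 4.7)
The plan is to use the Bloch--Srinivas decomposition of the diagonal, combined with Matsusaka's theorem on $1$-cycles. I may first assume $K = \Omega$: numerical equivalence is insensitive to base change, and rational equivalence with $\Q$-coefficients descends along extensions of algebraically closed fields, so a positive answer over $\Omega$ descends to $K$.

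Since $\CH_0(X_\Omega)$ is supported on a curve, Bloch--Srinivas yields a decomposition
$$ N\cdot \Delta_X \;=\; \Gamma_1 + \Gamma_2 \qquad \text{in } \CH^{\dim X}(X\times X)_\Q, $$
with $\Gamma_1$ supported on $D\times X$ for some divisor $D\subset X$ and $\Gamma_2$ supported on $X\times C$ for some curve $C$. For a numerically trivial cycle $\alpha\in \mathrm{Z}^2(X)_\Q$, letting both sides act on $\alpha$ gives $N\alpha = \Gamma_{1,*}(\alpha) + \Gamma_{2,*}(\alpha)$ in $\CH^2(X)_\Q$. The term $\Gamma_{2,*}(\alpha)$ lies in $\CH^2(X) = \CH_{\dim X - 2}(X)$ and is supported on the $1$-dimensional $C$; for $\dim X \geq 4$ (the case of interest), this forces $\Gamma_{2,*}(\alpha)=0$ for dimension reasons. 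Hence $N\alpha = \Gamma_{1,*}(\alpha)$.

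Let $\pi\colon \tilde D\to D\hookrightarrow X$ be a desingularization, so that $\tilde D$ is smooth projective of dimension $\dim X - 1$. The correspondence $\Gamma_1$ is the pushforward under $(\pi\times\mathrm{id})$ of some $\tilde\Gamma_1 \in \CH^{\dim X-1}(\tilde D\times X)_\Q$, so $\Gamma_{1,*}$ factors as $\tilde\Gamma_{1,*}\circ \pi^*$. Since $\pi^*$ preserves numerical equivalence, $\pi^*\alpha$ is numerically trivial in $\CH^2(\tilde D)_\Q$. When $\dim X = 4$, $\tilde D$ is a smooth projective threefold and $\pi^*\alpha$ is a numerically trivial $1$-cycle; by Matsusaka's theorem, numerical and algebraic equivalence agree on $1$-cycles on a smooth projective variety, hence $\pi^*\alpha$ is algebraically trivial on $\tilde D$. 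Correspondences preserve algebraic equivalence, so $\tilde\Gamma_{1,*}(\pi^*\alpha)$ is algebraically, and a fortiori homologically, trivial on $X$. Dividing by $N$, I conclude that $\alpha$ is homologically trivial in $\CH^2(X)_\Q$.

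To upgrade homological triviality to rational triviality, I invoke the refinement of Bloch--Srinivas due to Murre: since $\CH_0(X)_\Q$ is supported on a curve, the Abel--Jacobi map $AJ^2\colon \CH^2(X)_{\hom,\Q}\to J^2(X)_\Q$ is injective, and the hypothesis $b_3(X) = 0$ forces $J^2(X) = 0$. Hence $\CH^2(X)_{\hom,\Q} = 0$, and combined with the previous paragraph, $\alpha = 0$ in $\CH^2(X)_\Q$. The main subtlety lies in the positive characteristic case, where the Hodge-theoretic intermediate Jacobian is unavailable; one circumvents this either by lifting $X$ to characteristic zero and using specialization of rational equivalence, or by replacing $J^2$ with an $\ell$-adic/motivic Abel--Jacobi target, which vanishes under $b_3(X)=0$ since this is a condition on any Weil cohomology.
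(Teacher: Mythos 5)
There is a genuine gap at the step where you write ``by Matsusaka's theorem, numerical and algebraic equivalence agree on $1$-cycles on a smooth projective variety.'' Matsusaka's theorem concerns \emph{divisors} (codimension-$1$ cycles), not $1$-cycles. For $1$-cycles on a threefold the statement is false in general: Griffiths's example of a general quintic threefold shows that a $1$-cycle can be homologically (hence numerically) trivial without being algebraically trivial. Even the weaker implication you actually need, namely that a numerically trivial $1$-cycle is homologically trivial, is a case of the open standard conjecture $D$. The root cause is your choice of which half of the Bloch--Srinivas decomposition to use: taking $\Gamma_1$ supported on $D\times X$ forces the action on $\CH^2(X)$ to factor through $\CH^2(\widetilde D)$, i.e.\ through $1$-cycles on the threefold $\widetilde D$, which is exactly where Matsusaka fails. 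The paper instead takes $\Gamma_1 \subset C\times X$ and $\Gamma_2 \subset X\times D$ (the transpose of your decomposition, which exists since $\Delta_X = {}^t\Delta_X$). Then $\Gamma_1$ kills $\CH^2(X)$ outright because its action factors through $\CH^2(\widetilde C)=0$ for a smooth curve $\widetilde C$, while $\Gamma_2$ factors the action through $\CH^1(\widetilde D)$, i.e.\ divisors on $\widetilde D$, to which Matsusaka's theorem genuinely applies. A by-product is that the paper's reduction does not need $\dim X\geq 4$.

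A secondary issue is the final step. To upgrade from homological to rational triviality you invoke injectivity of the Abel--Jacobi map into $J^2(X)_\Q$ and its vanishing from $b_3(X)=0$; this transcendental argument requires characteristic $0$, and your sketch of the positive-characteristic workaround (lifting, or an $\ell$-adic Abel--Jacobi target) is not spelled out and is not obviously available for an arbitrary smooth projective $X$ as in the statement. The paper sidesteps this by showing that $\CH^2_{\Z}(X)_{\alg}\otimes\Q$ injects into Murre's algebraic representative $\operatorname{Ab}^2_X(\bar K)$, which is an abelian variety of dimension at most $\tfrac{1}{2}b_3(X)=0$; this works uniformly over any base field. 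The overall strategy you outline (Bloch--Srinivas plus Matsusaka plus a representability/Abel--Jacobi argument) is the right one, but as written the middle step is incorrect and the last step is not characteristic-free.
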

\begin{proof} By a push-pull argument, we may assume that $K$ is algebraically closed. 
	The proof is classical and goes back to \cite{BS}. By \cite[Proposition~1]{BS}, there exists a positive integer $N$, a $1$-dimensional closed subscheme $C\subseteq X$, a divisor $D\subset X$ and cycles $\Gamma_1, \Gamma_2$ in $\CH_{\Z}^{\dim X}(X\times_K X)$ with respective supports contained in $C\times X$ and $X\times D$, such that $$N\Delta_X = \Gamma_1 + \Gamma_2 \quad \in \CH^{\dim X}_{\Z}(X\times_K X),$$
where $\CH^*_{\Z}$ denotes the Chow group with integral coefficients.	Let $\widetilde{D} \to D$ be an alteration, say of degree $d$, with $\widetilde D$ smooth over $K$.
The multiplication by $Nd$ map on $\CH_{\Z}^2(X)$ then factors as 
\begin{equation}\label{E:diag-cor}
\begin{tikzcd}
\CH^2_{\Z}(X) \arrow{r} & \CH^1_{\Z}(\widetilde D) \arrow{r} & \CH^2_{\Z}(X)
\end{tikzcd}
\end{equation}
   where the arrows are induced by correspondences with integral coefficients. Since numerical and algebraic equivalence agree for codimension-1 cycles on $\widetilde D$, we find that numerical and algebraic equivalence agree on  $\CH^2_{\Z}(X)$. It remains to show that the group of algebraically trivial cycles $\CH_{\Z}^2(X)_{\alg}$ is zero after tensoring with $\Q$. For that purpose, we consider the diagram~\eqref{E:diag-cor} restricted to algebraically trivial cycles. We obtain a commutative diagram
   $$
   \begin{tikzcd}
   	\CH^2_{\Z}(X)_{\alg} \arrow{r} \arrow{d} & \CH^1_{\Z}(\widetilde D)_{\alg} \arrow{r} \arrow{d}{\simeq} & \CH^2_{\Z}(X)_{\alg} \arrow{d} \\
   	\operatorname{Ab}^2_X(\bar K)  \arrow{r} & \operatorname{Pic}^0_{\widetilde D}(\bar K) \arrow{r} & 	\operatorname{Ab}^2_X(\bar K)
   \end{tikzcd}
   $$
   where the composition of the horizontal arrows is given by multiplication by $Nd$, and where the vertical arrows are Murre's algebraic representatives~\cite{Murre-Ab} (these are regular homomorphisms to abelian varieties that are universal). A diagram chase shows that $\CH^2_{\Z}(X)_{\alg} \to 	\operatorname{Ab}^2_X(\bar K) $ is injective after tensoring with $\Q$. We conclude with \cite[Theorem~1.9]{Murre-Ab} which gives the upper bound $\dim 	\operatorname{Ab}^2_X \leq \frac{1}{2}\dim_{\Q_\ell}\HH^3(X_{\bar K},\Q_\ell)$.
\end{proof}

	\subsection{Refined Chow--K\"unneth decomposition}\label{SS:rCK}
	
	Fix a smooth cubic fourfold $X$ over $K$. 
	We are going to produce a refined Chow--K\"unneth decomposition for $X$ that is similar to that for surfaces constructed in \cite[\S 7.2.2]{KMP}, and extending the construction in \cite{BoPe} to arbitrary base fields.  Refining the primitive motive to the transcendental motive is an essential step towards the proof of Theorem~\ref{T:cubics} as it makes it possible to use the ``weight argument'' of Lemma~\ref{lem:weightargument} below. For that purpose,
	recall from Lemma~\ref{L:ratnum} that  $\CH^2(X_{\bar K}) = \overline \CH^2(X_{\bar K})$. This way we can complete $\langle h_X^2 \rangle \subset \CH^2(X)$ to an orthogonal basis $\{ h_{X}^2, \alpha_1, \ldots, \alpha_r\}$ of $\CH^2(X_{\bar K})$ with respect to the intersection product.
 The correspondence 
	\begin{equation}\label{eqn:pi4alg}
	\pi^4_{\alg} := \frac{1}{3} h_X^2\times h_X^2 + \sum_{i=1}^{r} \frac{1}{\deg(\alpha_i\cdot \alpha_i)} \alpha_i \times \alpha_i
	\end{equation} 
	then defines an idempotent in $\CH^4(X_{\bar K}\times_{\bar K} X_{\bar K})$. 
	On the one hand,   the correspondence $\pi^4_{\alg}$ comes from $\CH^2(X_{\bar K}) \otimes \CH^2(X_{\bar K})$ and is Galois-invariant as it defines the intersection pairing on $\CH^2(X_{\bar K})$, and the latter is obviously Galois-invariant. Since we are working with rational coefficients, by  \cite[Example~1.7.6]{MR1644323} and the fact that any cycle is defined over a finite Galois extension of $K$, it follows that $\pi^4_{\alg}$ is defined over $K$, i.e., is in the image of $\CH^4(X\times_{ K} X)$ after base-change to $\bar K$. 
On the other hand, $\pi^4_{\alg}$ commutes with $\pi^4_X$ and $\pi^4_{\prim}$ and is cohomologically the orthogonal projector on the subspace $\operatorname{Im}\big(\CH^2(X_{\bar K}) \to \HH^4(X) \big)$ spanned by $\bar K$-algebraic classes. 
	  In addition, we have $\pi^4_{\alg} \circ \pi^4_{X} = \pi^4_{X} \circ \pi^4_{\alg} = \pi^4_{\alg}$.
	  \medskip
	
	We then define $$\pi^4_{\tr} := \pi^4_X - \pi^4_{\alg}.$$ It is an idempotent correspondence in $\CH^4(X\times_K X)$ which cohomologically is the orthogonal projector on the \emph{transcendental cohomology} $\HH^4_{\tr}(X)$, i.e., by definition of transcendental cohomology, the orthogonal projector on the orthogonal complement to the $\bar K$-algebraic classes in~$\HH^4(X)$. In addition, $\pi^4_{\tr}$ commutes with $\pi^4_{\prim}$ and we have
	\begin{equation}\label{E:commute}
	\pi^4_{\prim} \circ  \pi^4_{\tr} =  \pi^4_{\tr}\circ  \pi^4_{\prim} =  \pi^4_{\tr}.
	\end{equation}
	Note that, while $\pi^4_{\prim}$ is defined family-wise for the universal cubic fourfold $\mathcal{X}\to B$, $\pi^4_{\tr}$ and $\pi^4_{\alg}$ are not.
	\medskip

	Denote by $\h^{i}(X)$, $\h^{4}_{\tr}(X)$ and $\h^{4}_{\alg}(X)$ the Chow motives $(X, \pi^{i}_{X})$, $(X, \pi^{4}_{\tr})$, and $(X, \pi^{4}_{\alg})$ respectively. From the above, we get the following refined Chow--K\"unneth decomposition\,:
	\begin{equation}\label{eqn:WeightDecomp}
	\h(X)=\h^{0}(X)\oplus \h^{2}(X)\oplus\h^{4}_{\alg}(X)\oplus\h^{4}_{\tr}(X)\oplus\h^{6}(X)\oplus\h^{8}(X),
	\end{equation} 
	where $\h^{2i}(X)\simeq \1(-i)$ for $i=0, 1, 3, 4$, the base-change to $\bar K$ of $\h^{4}_{\alg}(X)$ is a direct sum of copies of $\1(-2)$, and $\h^4_{\tr}(X)$ is a direct summand of $\h^4_{\prim}(X)$.\medskip

	As an immediate consequence of \eqref{E:commute}, we have the following consequence of Lemma~\ref{L:key}.
	
	\begin{lem} \label{L:key2}
		The following relations hold in $\CH^4(X\times_K X)$\,:
		$$\pi^4_{\tr} \circ v_4(\P_X^L) \circ \pi^4_{\tr} = \pi^4_{\tr} \quad  \text{and} \quad \pi^4_{\tr} \circ v_4(\P_X^R) \circ \pi^4_{\tr} = \pi^4_{\tr}.$$
		In other words, the correspondences $v_4(\P_X^L)$ and $v_4(\P_X^R)$ act as the identity on the transcendental motive $\h_{\tr}^4(X)$.\qed
	\end{lem}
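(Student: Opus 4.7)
The plan is to deduce Lemma~\ref{L:key2} from Lemma~\ref{L:key} formally, using only the commutation relation \eqref{E:commute} together with the idempotence of $\pi^4_{\tr}$. No new family-wise argument is needed: the Franchetta input has already been absorbed into Lemma~\ref{L:key}, which is why it was set up at the primitive level (where $\pi^4_{\prim}$ is defined family-wise on the universal family $\XX\to B$), rather than at the transcendental level.

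Concretely, I would start from $\pi^4_{\tr} \circ v_4(\P_X^L) \circ \pi^4_{\tr}$ and insert $\pi^4_{\prim}$ at both inner ends via the identities $\pi^4_{\tr} = \pi^4_{\tr} \circ \pi^4_{\prim}$ and $\pi^4_{\tr} = \pi^4_{\prim} \circ \pi^4_{\tr}$ provided by \eqref{E:commute}, obtaining
\[
\pi^4_{\tr} \circ v_4(\P_X^L) \circ \pi^4_{\tr} \;=\; \pi^4_{\tr} \circ \bigl(\pi^4_{\prim} \circ v_4(\P_X^L) \circ \pi^4_{\prim}\bigr) \circ \pi^4_{\tr}.
\]
Applying Lemma~\ref{L:key} to the parenthesized factor collapses this to $\pi^4_{\tr} \circ \pi^4_{\prim} \circ \pi^4_{\tr}$, which by another application of \eqref{E:commute} equals $\pi^4_{\tr} \circ \pi^4_{\tr}$, and this in turn equals $\pi^4_{\tr}$ by idempotence of $\pi^4_{\tr}$. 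The identity involving $\P_X^R$ is obtained by the same manipulation, since Lemma~\ref{L:key} provides both relations simultaneously.

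I do not expect a serious obstacle: the argument is a routine book-keeping of composed correspondences. The only conceptual point worth emphasising, and the reason the statement is packaged as a separate lemma, is the one flagged in the remark preceding the statement: the refinement from $\pi^4_{\prim}$ to $\pi^4_{\tr}$ in \eqref{eqn:WeightDecomp} is carried out fiber-by-fiber (it depends on choosing an orthogonal basis of $\CH^2(X_{\bar K})$ completing $h_X^2$), so $\pi^4_{\tr}$ cannot itself be spread out over~$B$. What makes the proof go through is precisely the factorization $\pi^4_{\tr} = \pi^4_{\tr}\circ \pi^4_{\prim} = \pi^4_{\prim}\circ\pi^4_{\tr}$, which lets us invoke the family-wise statement of Lemma~\ref{L:key} fiberwise and then cut down to the transcendental summand for free.
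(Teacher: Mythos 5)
Your proposal is correct and matches the paper's intent exactly: the paper presents Lemma~\ref{L:key2} as an immediate consequence of Lemma~\ref{L:key} and~\eqref{E:commute} (with a \verb|\qed| directly after the statement), and your explicit computation $\pi^4_{\tr}\circ v_4(\P_X^L)\circ\pi^4_{\tr} = \pi^4_{\tr}\circ\pi^4_{\prim}\circ v_4(\P_X^L)\circ\pi^4_{\prim}\circ\pi^4_{\tr} = \pi^4_{\tr}\circ\pi^4_{\prim}\circ\pi^4_{\tr} = \pi^4_{\tr}$ is precisely the book-keeping being left to the reader. Your remark that the Franchetta input is already absorbed into Lemma~\ref{L:key} and that this is exactly why the argument is set up at the primitive (family-wise) level before cutting down to the transcendental summand is also the point the authors flag in the preceding sentence.
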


	\subsection{A weight argument}\label{ss:Weight}
	One defines a notion of \emph{weight} on the  Chow motives appearing in the decomposition~\eqref{eqn:WeightDecomp} in the following way: for any $i\in \Z$, the Tate motive $\1(-i)$ has weight $2i$; $\h^{4}_{\tr}(X)(-i)$ and $\h^{4}_{\alg}(X)(-i)$ have weight $4+2i$.
	As a first step towards our weight argument below (Lemma~\ref{lem:weightargument}), we need the following property of the refined Chow--K\"unneth decomposition~\eqref{eqn:WeightDecomp}.
	\begin{prop}\label{prop:orthogonality}
		Let $X$ and $X'$ be two smooth cubic fourfolds over a field $K$. 
		\begin{enumerate}[$(i)$]
			\item  There is no non-zero morphism from a motive of given weight to a motive of strictly bigger weight among the motives $\1$, $\h^{4}_{\tr}(X)$, $\h^{4}_{\alg}(X)$, $\h^{4}_{\tr}(X')$ and $\h^{4}_{\alg}(X')$ and their Tate twists.
			
			\item $\1(-2)$ and $\h^{4}_{\tr}(X)$ are orthogonal: $\Hom\left(\h^{4}_{\tr}(X), \1(-2)\right)=0$ and $\Hom\left(\1(-2), \h^{4}_{\tr}(X) \right)=0$.
			\item[$(ii')$] $\h^{4}_{\alg}(X')$ and $\h^{4}_{\tr}(X)$ are orthogonal: $$\Hom\left(\h^{4}_{\tr}(X), \h^{4}_{\alg}(X')\right)=0  \quad\text{and}\quad  \Hom\left(\h^{4}_{\alg}(X'), \h^{4}_{\tr}(X) \right)=0.$$

		\end{enumerate}
	\end{prop}
	\begin{proof} Since pull-back by base-change of fields gives injective maps on Chow groups with rational coefficients (by a push-pull argument\,; see, e.g., \cite[Example~1.7.4]{MR1644323}), we may assume $\h^4_{\alg}(X)$ and $\h^4_{\alg}(X')$ are a direct sum of copies of $\1(-2)$. 
		
		The proposition is straightforward to check if one of the motives involved is a Tate motive\,: since $\CH^{l}(X)=\CH^{l}(\h^{2l}(X))$ for $l=0, 1$ and $\CH^{2}(X)=\CH^{2}(\h^{4}_{\alg}(X))$ by construction,  we deduce that for $l\leq 2$, the group $\CH^{l}(\h^{4}_{\tr}(X))$ vanishes, i.e., $\Hom(\1(-l), \h^{4}_{\tr}(X))=0$. Since $\h^{4}_{\tr}(X)^{\vee}= \h^{4}_{\tr}(X)(4)$, we deduce by dualizing that $\Hom(\h^{4}_{\tr}(X), \1(-l))=0$ for $l\geq 2$.
		
			
			It remains to deal with the case where both motives are Tate twists of $\h^{4}_{\tr}(X)$ and $\h^{4}_{\tr}(X')$. Since $\CH_0(\h^{4}_{\tr}(X_\Omega)) = 0$ and $\pi^4_{\tr} = {}^t\pi^4_{\tr}$, we get from
		\cite[Corollary~2.2]{VialCK} that  $\h^{4}_{\tr}(X)(1)$ is isomorphic to a direct summand $N$ of the Chow motive of a surface $S$. Similarly, $\h^{4}_{\tr}(X')(1)$ is isomorphic to a direct summand  of the Chow motive of a surface $S'$.
		As such, we have $\Hom\left(\h^{4}_{\tr}(X), \h^{4}_{\tr}(X')(-l)\right)= \Hom \left(\1(l-2),N\otimes N' \right)$.
		Since $N\otimes N'$ is effective with cohomology concentrated in degree 4, we can then conclude thanks to Lemma~\ref{L:vanishing} below, which is a more general version of \cite[Theorem~1.4(ii)]{FV2} (which states that $\Hom\left(\h^{2}(S), \h^{2}(S')(-l)\right)=0$ for all $l>0$). 
	\end{proof}

	\begin{lem}\label{L:vanishing}
		Let $\HH^*$ be $\ell$-adic cohomology with $\ell \neq \operatorname{char}(K)$. Let $M$ be an effective Chow motive such that $\HH^i(M) =0$ for $i \leq 1$ and such that $\Hom_{\mathcal M_{\hom}} (\1 (-1), M) = 0$ (e.g., $\HH^2(M) = 0$). Then $\CH^l(M) : = \Hom_{\mathcal M} (\1 (-l), M) = 0$ for $l<2$.
	\end{lem}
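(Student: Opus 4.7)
The plan is to write the effective motive as $M = (Y, p, 0)$ for some smooth projective $K$-variety $Y$ and idempotent correspondence $p \in \CH^{\dim Y}(Y \times_K Y)$, so that $\CH^l(M) = p_*\CH^l(Y)$ and $\HH^i(M) = p_*\HH^i(Y)$. The case $l = 0$ is then immediate: the cycle class map $\CH^0(Y) \to \HH^0(Y)$ is injective (with rational coefficients it is an isomorphism given by the degree on each connected component), so it restricts to an injection $\CH^0(M) \hookrightarrow \HH^0(M) = 0$.

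For $l = 1$, the assumption $\Hom_{\mathcal M_{\hom}}(\1(-1), M) = 0$ is precisely the statement that the cycle class map $\CH^1(M) \to \HH^2(M)$ vanishes. Hence $\CH^1(M)$ lies in the subgroup of homologically trivial classes inside $p_*\CH^1(Y)$, and after base change to $\bar K$ (which is injective on $\CH^1$ by the standard pushforward--pullback argument), this embeds $\CH^1(M)$ into $p_*\Pic^0(Y_{\bar K})_{\Q}$, using that numerical and homological equivalence agree on divisors modulo torsion. It therefore suffices to prove that $p_*$ acts as zero on $\Pic^0(Y_{\bar K})_{\Q}$.

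For this final step, I would invoke the fact that the correspondence $p$ induces an endomorphism $\phi_p \in \End(A)_{\Q}$ of the Picard variety $A := \Pic^0_{Y/K}$, whose action on $\bar K$-points agrees with that of $p_*$ on $\Pic^0(Y_{\bar K})_{\Q}$. The induced action of $\phi_p$ on the rational $\ell$-adic Tate module $V_\ell A$ is canonically identified (up to duality) with the action of $p_*$ on $\HH^1(Y, \Q_\ell)$, and the hypothesis $\HH^1(M) = 0$ forces the latter to be zero. Weil's faithfulness of the Tate-module representation, $\End(A)_{\Q} \hookrightarrow \End(V_\ell A)$, then yields $\phi_p = 0$, hence $p_*\Pic^0(Y_{\bar K})_{\Q} = 0$ and $\CH^1(M) = 0$.

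The main obstacle I anticipate is the bookkeeping for the interface between correspondences acting on divisors and endomorphisms of abelian varieties, together with the compatibility of the $\phi_p$-action on $V_\ell A$ with the $p_*$-action on $\HH^1$. I would handle this by invoking the classical theory of the Picard scheme and Weil's theorem as a black box rather than reproving them; the remaining ingredients -- effectivity of $M$, the injectivity of pullback on $\CH^1$ along $K \to \bar K$, and the $l = 0$ case -- are essentially formal.
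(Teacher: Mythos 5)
Your proof is correct and follows the same strategy as the paper: write $M=(Y,p,0)$, dispose of $l=0$ via injectivity of the cycle class on $\CH^0$, and for $l=1$ observe that $p_*$ kills both the N\'eron--Severi part (by the hypothesis on $\Hom_{\mathcal M_{\hom}}(\1(-1),M)$) and $\Pic^0(Y_{\bar K})_{\Q}$ (since $p_*\HH^1=0$ and $\End(\Pic^0_{Y})_{\Q}$ acts faithfully on $\HH^1$). The paper's version states the $l=1$ step in one terse sentence, but it implicitly rests on exactly the faithfulness-of-$\HH^1$ (Weil/Tate-module) argument that you spell out.
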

	\begin{proof}
		By definition of an effective motive, there exists a smooth projective variety $X$ and an idempotent $r\in \operatorname{End}_{\mathcal M}(\h(X))$ such that $M\simeq (X,r,0)$. By assumption, $r$ acts as zero on $\HH^0(X)$, so that $\CH^0(M) := r_*\CH^0(X) = 0$. Further, we have  $\CH^1(M) := r_*\CH^1(X) = 0$ since by assumption $r$ acts as zero both on $\operatorname{Im}(\CH^1(X) \to \HH^2(X))$ and on $\HH^1(X)$ (hence on $\Pic^0_X(K)$).
	\end{proof}
	
	  We will need the following simple observation, which is an abstraction of \cite[\S1.2.3]{FV2}.
	
	\begin{lem}[Weight argument]\label{lem:weightargument}
		Let $\mathcal{S} := \{N_i, i\in I\}$ be a collection of Chow motives whose objects $N_i$ are all equipped with an integer $k_i$ called \emph{weight} such that any morphism from an object of smaller weight to an object of larger weight is zero. For $r = 0,\ldots, n$, let $M_r$ be a Chow motive isomorphic to a direct sum of objects in $\mathcal{S}$.
	Suppose we have a chain of morphisms of Chow motives
		\begin{equation}\label{eqn:chain}
			M=M_{0}\to M_{1} \to M_{2}\to \dots \to M_{n}=M',
		\end{equation}
		such that $M$ and $M'$ are both of (pure) weight $k$ for some integer $k$, i.e., such that $M$ and $M'$ are direct sums of objects of $\mathcal S$ all of weight $k$. Then the composition of morphisms in \eqref{eqn:chain} is equal to the following composition 
		$$ M=M_{0}\to M_{1}^{w=k} \to M_{2}^{w=k}\to \cdots\to M_{n-1}^{w=k}\to M_{n}=M',$$
		where $M_{i}^{w=k}$ means the direct sum of the summands (in $\mathcal S$) of $M_i$ of weight $k$. 
	\end{lem}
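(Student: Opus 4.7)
The plan is to expand the composition along the weight decomposition of each intermediate motive $M_r$ and to observe that, under the semi-orthogonality hypothesis on $\mathcal S$, only the summand that stays at weight $k$ throughout can be nonzero.

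For each $r = 0, \ldots, n$, decompose $M_r = \bigoplus_\ell M_r^{w=\ell}$ into its weight pieces (finitely many are nonzero since $M_r$ is a finite direct sum of objects in $\mathcal S$), with canonical projectors $p_r^{=\ell} : M_r \to M_r^{w=\ell}$ and inclusions $\iota_r^{=\ell} : M_r^{w=\ell} \to M_r$ satisfying $\mathrm{id}_{M_r} = \sum_\ell \iota_r^{=\ell} \circ p_r^{=\ell}$. The purity hypothesis at the endpoints gives $M_0 = M_0^{w=k}$ and $M_n = M_n^{w=k}$. Denote by $f_r : M_{r-1} \to M_r$ the $r$-th arrow in \eqref{eqn:chain}. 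Applying the hypothesis on $\mathcal S$ summand by summand to the weight pieces yields
$$p_r^{=b} \circ f_r \circ \iota_{r-1}^{=a} \;=\; 0 \quad \text{whenever } a < b,$$
that is, nonzero morphisms between weight pieces are weight-nonincreasing.

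Inserting $\mathrm{id}_{M_r} = \sum_\ell \iota_r^{=\ell} \circ p_r^{=\ell}$ between each pair of consecutive arrows, the composition $f_n \circ f_{n-1} \circ \cdots \circ f_1$ expands as a sum, indexed by tuples $(w_1, \ldots, w_{n-1})$ of weights, of compositions routed through the weight pieces $M_1^{w=w_1}, \ldots, M_{n-1}^{w=w_{n-1}}$. By the above vanishing together with the endpoint purity, such a summand can be nonzero only when $k \geq w_1 \geq w_2 \geq \cdots \geq w_{n-1} \geq k$, which forces $w_r = k$ for every $r$. The unique surviving summand is then precisely the composition through $M_0 \to M_1^{w=k} \to \cdots \to M_{n-1}^{w=k} \to M_n$ described in the statement.

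There is no essential obstacle: the lemma is a purely formal consequence of the weight decomposition combined with the monotonicity of morphisms with respect to weight. The only point worth flagging is that the strictness of the weight inequality in the hypothesis on $\mathcal S$ is what rigidifies the bookkeeping, so that the endpoints lying in weight $k$ pin every intermediate weight to~$k$.
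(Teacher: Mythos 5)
Your proof is correct and is essentially the same argument as in the paper: expand the composition by inserting the weight decompositions of the intermediate motives, observe that each summand vanishes unless the weights form a chain $k \geq w_1 \geq \cdots \geq w_{n-1} \geq k$, and conclude that only the all-weight-$k$ term survives. You simply make the bookkeeping with projectors and inclusions more explicit than the paper does.
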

	\begin{proof}
		The composition in \eqref{eqn:chain} is clearly the sum of all compositions of the form 
		$$ M=M_{0}\to M_{1}^{w=k_{1}} \to M_{2}^{w=k_{2}}\to \cdots \to M_{n-1}^{w=k_{n-1}}\to M_{n}=M',$$ for $k_{i}\in \Z$.
		However, this composition is non-zero only if $k\geq k_{1}\geq k_{2}\geq \dots \geq k_{n-1}\geq k$ by assumption. Therefore the only non-zero contribution is given by the case where $k_{i}=k$ for all $1\leq i\leq n-1$.
	\end{proof}

	\subsection{Main result}
	\label{Subsec:MainResult}
	Let $X$ and $X'$ be two smooth cubic fourfolds over a field $K$. Assume that their Kuznetsov components $\AA_X$ and $\AA_{X'}$ are \textit{Fourier--Mukai equivalent}, this means there exists an object $\mathcal{E}\in \D^b(X\times_K X')$ such that
$$	\begin{tikzcd}
 F\colon \AA_X \arrow[hook]{r}{i_X} 
 & \D^b(X) \arrow{r}{\Phi_{\mathcal E}} 
 & \D^b(X') \arrow[twoheadrightarrow]{r}{i_{X'}^*} 
 & \AA_{X'} 
	\end{tikzcd}
$$
is an equivalence.  Here $\Phi_{\mathcal E}\colon \D^b(X)\to \D^b(X')$ is the Fourier--Mukai transform associated to the Fourier--Mukai kernel $\mathcal{E}$\,; explicitly,
$$\Phi_{\mathcal{E}}(E):= p_{X', *}(p_X^*(E)\otimes \mathcal{E}),$$
where $p_X$ and $p_{X'}$ are the natural projections from $X\times_K X'$ to $X$ and $X'$ respectively.
Note that by Li--Pertusi--Zhao \cite{LPZ-enhancement22}, over $K=\C$, any equivalence of triangulated categories between $\AA_X$ and $\AA_{X'}$ is a Fourier--Mukai equivalence.

Adding the right adjoints, we get a diagram
$$	\begin{tikzcd}
F: \AA_X \arrow[hook, shift left]{r}{i_X}  \arrow[twoheadleftarrow, shift right, "{i_{X}^!}"']{r}
& \D^b(X) \arrow[shift left]{r}{\Phi_{\mathcal E}} \arrow[leftarrow, shift right, "\Phi_{\mathcal E^R}"']{r} 
& \D^b(X') \arrow[twoheadrightarrow, shift left]{r}{i_{X'}^*}   \arrow[hookleftarrow, shift right, "i_{X'}"']{r}
& \AA_{X'}: F^{R}
\end{tikzcd}
$$
where  $F^R := i_X^! \circ \Phi_{\mathcal E^R}\circ i_{X'}$ denotes the right adjoint functor of $F := i_{X'}^*\circ \Phi_{\mathcal E}\circ i_X$ and where $\mathcal E^R = \mathcal{E}^\vee \otimes^{\mathds{L}} p_X^*\omega_X[4]$ denotes the right adjoint of $\mathcal E$. Since $F$ is an equivalence by assumption, $F^{R}$ is in fact the inverse of $F$, hence we have $F^R\circ F \simeq \mathrm{id}_{\AA_X}$ and $F\circ F^{R} \simeq \mathrm{id}_{\AA_{X'}}$. More explicitly, 
  $$ i_X^! \circ \Phi_{\mathcal E^R}\circ i_{X'} \circ  i_{X'}^*\circ \Phi_{\mathcal E}\circ i_X \simeq \mathrm{id}_{\AA_{X}};$$
  $$ i_{X'}^*\circ \Phi_{\mathcal E}\circ i_X \circ i_X^! \circ \Phi_{\mathcal E^R}\circ i_{X'} \simeq \mathrm{id}_{\AA_{X'}}.$$
These imply that $$i_X\circ i_X^! \circ \Phi_{\mathcal E^R}\circ i_{X'} \circ  i_{X'}^*\circ \Phi_{\mathcal E}\circ i_X\circ i_X^* \simeq i_X\circ i_X^*;$$ 
$$i_{X'}\circ i_{X'}^*\circ \Phi_{\mathcal E}\circ i_X \circ i_X^! \circ \Phi_{\mathcal E^R}\circ i_{X'}\circ i^{!}_{X'} \simeq i_{X'}\circ  i^{!}_{X'}.$$
By definition of the projection functors $p_X^L$ and $p_X^R$ in Section \ref{sec:Projectors}, we have
\begin{eqnarray}\label{E:EER}
\big( p_X^R\circ  \Phi_{\mathcal E^R}\circ p_{X'}^R\big) \circ \big(p_{X'}^L\circ \Phi_{\mathcal E}\circ p_X^L\big) \simeq p_X^L;\\
\label{E:EER2}
\big(p_{X'}^L\circ \Phi_{\mathcal E}\circ p_X^L\big) \circ \big( p_X^R\circ  \Phi_{\mathcal E^R}\circ p_{X'}^R\big) \simeq p_{X'}^R,
\end{eqnarray}
where we have used the isomorphisms $p_{X'}^R \circ p_{X'}^L \simeq p_{X'}^L$ and $p_X^L\circ p_X^R\simeq p_{X}^{R}$ of Proposition~\ref{P:proj}. 
\medskip

Recall that we have defined in \S \S \ref{SS:Kprim}-\ref{SS:rCK} the projectors $\pi^4_{\prim}, \pi^4_{\tr}, \pi^4_{\alg}\in \CH^4(X\times_K X)$ for a cubic fourfold $X$. In the sequel, when dealing with two cubic fourfolds $X$ and $X'$, we keep the same notation for $X$ and use $\pi^4_{\prim'}, \pi^4_{\tr'}, \pi^4_{\alg'}\in \CH^4(X'\times_K X')$  for the corresponding projectors for $X'$. The following is the key step of our proof.
\begin{thm}\label{T:GammaTrans}
The correspondence $\Gamma_{\tr} := \pi^4_{\tr'} \circ v_4(\mathcal E) \circ \pi^4_{\tr}$ in $\CH^4(X\times_K X')$ defines an isomorphism 
$$\begin{tikzcd}
\Gamma_{\tr} : \ \h^4_{\tr}(X) \arrow{r}{\simeq} & \h^4_{\tr}(X')
\end{tikzcd}
$$ with inverse given by its transpose.
In other words, via Proposition~\ref{P:quadratic}, the transcendental motives $\h^4_{\tr}(X)$ and $\h^4_{\tr}(X')$ are isomorphic as quadratic space objects. 
\end{thm}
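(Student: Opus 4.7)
The plan is to derive ${}^t\Gamma_{\tr}\circ \Gamma_{\tr} = \pi^4_{\tr}$ and $\Gamma_{\tr}\circ {}^t\Gamma_{\tr} = \pi^4_{\tr'}$ by pushing the Fourier--Mukai relations \eqref{E:EER} and \eqref{E:EER2} to Chow via Mukai vectors, absorbing the four mutation-kernel factors by Lemma~\ref{L:key2}, and then identifying $\pi^4_{\tr}\circ v_4(\mathcal E^R)\circ \pi^4_{\tr'}$ with ${}^t\Gamma_{\tr}$. Proposition~\ref{P:quadratic} then finishes the proof.

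Since the Mukai vector of a convolution of Fourier--Mukai kernels equals the composition of the individual Mukai vectors as correspondences (a standard Grothendieck--Riemann--Roch computation), the relation \eqref{E:EER} yields
\[
v(\P_X^R)\circ v(\mathcal E^R)\circ v(\P_{X'}^R)\circ v(\P_{X'}^L)\circ v(\mathcal E)\circ v(\P_X^L) = v(\P_X^L) \quad \text{in } \CH^*(X\times_K X).
\]
Sandwiching both sides by $\pi^4_{\tr}$ collapses the right-hand side to $\pi^4_{\tr}$ by Lemma~\ref{L:key2}. On the left-hand side, I would insert $\pi^4_{\tr}$ or $\pi^4_{\tr'}$ between every consecutive pair of factors; this is legitimate by the weight argument of Lemma~\ref{lem:weightargument} together with Proposition~\ref{prop:orthogonality}(ii)--(iii), since both endpoints $\h^4_{\tr}(X)$ have weight~$4$ and only the weight-$4$ transcendental summand of each intermediate motive can contribute (the algebraic summand $\h^4_{\alg}$ being orthogonal to $\h^4_{\tr}$ by Proposition~\ref{prop:orthogonality}(ii)). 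Four applications of Lemma~\ref{L:key2} then absorb the four mutation kernels, yielding
\[
\pi^4_{\tr}\circ v(\mathcal E^R)\circ \pi^4_{\tr'}\circ v(\mathcal E)\circ \pi^4_{\tr} = \pi^4_{\tr}.
\]
A further weight-argument analysis of each bi-degree piece $\pi^4_{\tr}\circ v_i(\mathcal E^R)\circ \pi^4_{\tr'}\circ v_j(\mathcal E)\circ \pi^4_{\tr}$ forces $i=j=4$: the intermediate twisted motive $\h^4_{\tr'}(X')(j-4)$ has weight $12-2j$, and matching the weight-$4$ endpoints pins $j=4$, hence also $i=4$. This reduces the identity to $\pi^4_{\tr}\circ v_4(\mathcal E^R)\circ \pi^4_{\tr'}\circ v_4(\mathcal E)\circ \pi^4_{\tr}=\pi^4_{\tr}$.

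The main obstacle is the last step: identifying $\pi^4_{\tr}\circ v_4(\mathcal E^R)\circ \pi^4_{\tr'}$ with ${}^t\Gamma_{\tr}$. From the adjunction formula $\mathcal E^R\simeq \sigma_*(\mathcal E^\vee\otimes p_X^*\omega_X[4])$ and the Mukai-duality relation $v(\mathcal E^\vee) = v(\mathcal E)^\vee\cdot \exp(c_1(X\times X')/2)$ recalled in the proof of Lemma~\ref{lemma:ProjectorPL}, a direct computation gives
\[
v_4(\mathcal E^R) = {}^tv_4(\mathcal E) + \sum_{\substack{a+b+c=4\\ b+c\geq 1}} C_{a,b,c}\, {}^tv_a(\mathcal E)\cdot q_2^*h_X^b\cdot q_1^*h_{X'}^c
\]
for explicit rational constants $C_{a,b,c}$, where $q_1,q_2$ denote the projections from $X'\times X$. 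I claim each correction term is killed by the sandwich $\pi^4_{\tr}\circ(-)\circ \pi^4_{\tr'}$. A factor $q_2^*h_X^b$ with $b\geq 1$ forces the output of the correspondence in codimension~$2$ to be a multiple of $h_X^2$ (by projection formula together with $\CH^0(X)=\Q$ and $\CH^1(X)=\Q h_X$), hence algebraic and killed by $\pi^4_{\tr}$. For factors $q_1^*h_{X'}^c$ with $c\geq 1$, the correspondence pre-multiplies its input by $h_{X'}^c$; the cases $c=2,3$ vanish respectively because $\alpha\cdot h_{X'}^2=0$ in $\CH^4(X')=\Q$ for transcendental $\alpha$ (orthogonality to $h_{X'}^2$), and because $\CH^5(X')=0$. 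The case $c=1$ is where Theorem~\ref{thm:MCKcubic} enters: the multiplicative Chow--K\"unneth relation forces multiplication $\h^4_{\tr'}(X')\otimes \h^2(X')\to \h(X')$ to factor through $\h^6(X')\simeq \1(-3)$, so $\alpha\cdot h_{X'}$ is a rational multiple of $h_{X'}^3$ for $\alpha\in\h^4_{\tr'}(X')$, and the coefficient vanishes because $\alpha\cdot h_{X'}^2=0$. Combining these vanishings gives $\pi^4_{\tr}\circ v_4(\mathcal E^R)\circ \pi^4_{\tr'}={}^t\Gamma_{\tr}$, hence ${}^t\Gamma_{\tr}\circ \Gamma_{\tr}=\pi^4_{\tr}$; running the same argument on \eqref{E:EER2} produces the symmetric identity $\Gamma_{\tr}\circ {}^t\Gamma_{\tr}=\pi^4_{\tr'}$, so Proposition~\ref{P:quadratic} concludes.
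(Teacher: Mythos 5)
Your overall plan is the same as the paper's, and several intermediate steps (the sandwiching, the weight/degree bookkeeping forcing $i=j=4$, the reduction to the self-transpose identity via $\pi^4_{\tr}\circ v_4(\mathcal E^R)\circ\pi^4_{\tr'}={}^t\Gamma_{\tr}$) are correct. But two genuine gaps remain.

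First, the passage from the functor isomorphism \eqref{E:EER} to the \emph{Chow-level} equality
$v(\P_X^R)\circ v(\mathcal E^R)\circ v(\P_{X'}^R)\circ v(\P_{X'}^L)\circ v(\mathcal E)\circ v(\P_X^L) = v(\P_X^L)$
is not just ``standard GRR''. Grothendieck--Riemann--Roch identifies the Mukai vector of a \emph{convolution of kernels} with a composition of correspondences, but \eqref{E:EER} is an isomorphism of \emph{functors}, and $p_X^L$ is not an equivalence, so Orlov-type uniqueness of kernels does not apply to promote it to an isomorphism of kernels. The paper instead observes that the two sides have the same cohomological action (hence the same cohomology class in $\HH^*(X\times X)$, hence are numerically equivalent), and \emph{then} invokes the Franchetta property of Proposition~\ref{T:Franchetta} to upgrade this to an equality in $\CH^*(X\times X)$. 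The Franchetta input is essential here, and it is absent from your argument.

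Second, and more seriously, your argument for killing the correction terms in the expansion of $v_4(\mathcal E^R)$ works at the level of \emph{actions on Chow groups}, not at the level of \emph{correspondences}. You argue, for instance, that the correspondence carrying a factor $q_2^*h_X^b$ produces outputs in $\CH^2(X)$ lying in $\Q h_X^2$, hence killed by $\pi^4_{\tr}$ on $\CH^2$. But the statement we need is $\pi^4_{\tr}\circ(Z\cdot h_X)=0$ as an equality in $\CH^4(X'\times X)$; a morphism of Chow motives is a correspondence, and showing it acts as zero on $\CH^*(X')$ does not show it is zero. (Similarly for your factors $q_1^*h_{X'}^c$.) The paper's Lemma~\ref{lem:hkill} proves the vanishing at the correspondence level in one stroke: it rewrites $Z\cdot h_X$ as a composition with $(\Delta_X)_*(h_X)$, then uses the excess intersection formula for the diagonal of a cubic in $\PP^5\times\PP^5$ to write $(\Delta_X)_*(3h_X)=\sum_i h_X^i\times h_X^{5-i}$, a sum of decomposable cycles each killed by $\pi^4_{\prim}\circ(-)$ (and $(-)\circ\pi^4_{\prim'}$) by the very construction of $\pi^4_{\prim}$. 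Relatedly, your appeal to Theorem~\ref{thm:MCKcubic} for the $c=1$ case is not needed for this theorem (the multiplicative CK relation is used only later, in Proposition~\ref{P:upgradeFrob}, to upgrade the isomorphism to one of Frobenius algebra objects), and in any case it does not repair the action-vs-correspondence gap.
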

\begin{proof} 
From the isomorphism of Fourier--Mukai functors  \eqref{E:EER}, it is not clear whether one can deduce an isomorphism between their Fourier--Mukai kernels in $\D^b(X\times X)$, i.e., whether one has an isomorphism
$\big( \P_X^R \ast \mathcal E^R  \ast \P_{X'}^R\big) \ast \big(\P_{X'}^L \ast \mathcal E \ast \P_X^L\big) \simeq \P_X^L$,
where $\ast$ stands for the convolution of Fourier--Mukai kernels. 
Nonetheless, by Canonaco--Stellari \cite[Theorem~1.2]{CS2}, the two sides have the same cohomology sheaves, and hence have the same class in $K_0(X\times X)$. 
By taking Mukai vectors, one obtains
the following equality in $\CH^{*}(X\times_K X)$\,:
\begin{equation}
\label{eqn:EqualAsCycle}
v(\P_X^R)\circ v(\mathcal E^R)\circ v(\P_{X'}^R) \circ v(\P_{X'}^L) \circ v(\mathcal E)\circ v(\P_X^L)= v(\P_X^L).
\end{equation}
The above equality implies that the composition 
\begin{multline*}
\h^{4}_{\tr}(X)\hookrightarrow \h(X)\xrightarrow{v(\P_X^L)} \bigoplus_{i} \h(X)(i)\xrightarrow{ v(\mathcal{E})}\bigoplus_{i}\h(X')(i)\xrightarrow{ v(\P_{X'}^L)} \bigoplus_{i}\h(X')(i)\\
\xrightarrow{ v(\P_{X'}^R)} \bigoplus_{i}\h(X')(i)\xrightarrow{ v(\mathcal{E}^{R})}\bigoplus_{i}\h(X)(i)\xrightarrow{ v(\P_X^R)}\bigoplus_{i}\h(X)(i)\twoheadrightarrow \h^{4}_{\tr}(X)
\end{multline*}
is equal to the composition 
$$\h^{4}_{\tr}(X)\hookrightarrow \h(X)\xrightarrow{ v(\P_X^L)} \bigoplus_{i}\h(X)(i)\twoheadrightarrow \h^{4}_{\tr}(X).$$
Here the ranges of the (finite) direct sums are not specified since they are irrelevant. 
 
 By the ``weight argument'' Lemma \ref{lem:weightargument}, combined with Proposition \ref{prop:orthogonality}$(i)$, we obtain that the composition 
 \begin{multline}\label{eqn:weight4}
\h^{4}_{\tr}(X)\hookrightarrow \h^{4}(X)\xrightarrow{v_{4}(\P_X^L)} \h^{4}(X)\xrightarrow{ v_{4}(\mathcal{E})}\h^{4}(X')\xrightarrow{ v_{4}(\P_{X'}^L)} \h^{4}(X')\\
\xrightarrow{ v_{4}(\P_{X'}^R)} \h^{4}(X')\xrightarrow{ v_{4}(\mathcal{E}^{R})}\h^{4}(X)\xrightarrow{ v_{4}(\P_X^R)}\h^{4}(X)\twoheadrightarrow \h^{4}_{\tr}(X)
\end{multline}
is equal to the composition 
$\h^{4}_{\tr}(X)\hookrightarrow \h^{4}(X)\xrightarrow{ v_{4}(\P_X^L)} \h^{4}(X)\twoheadrightarrow \h^{4}_{\tr}(X)$, which is the identity map of $\h^{4}_{\tr}(X)$ by Lemma \ref{L:key2}. Writing $\h^{4}=\h^{4}_{\tr}\oplus \h^{4}_{\alg}$ and using Proposition \ref{prop:orthogonality}$(ii')$, we deduce that each map in \eqref{eqn:weight4} factors through $\h^{4}_{\tr}$ or $\h^4_{\tr'}$. 
 In other words, we have the following equality:
 \begin{multline*}
\qquad \pi^4_{\tr} \circ v_4(\P_X^R)\circ \pi^4_{\tr} \circ v_4(\mathcal E^R) \circ \pi^4_{\tr'} \circ v_4(\P_{X'}^R)  
\circ \pi^4_{\tr'} \circ v_4(\P_{X'}^L) \circ \pi^4_{\tr'} \circ v_4(\mathcal E) \circ \pi^4_{\tr} \circ v_4(\P_{X}^L) \circ \pi^4_{\tr} = \pi^4_{\tr}.\qquad
 \end{multline*}
By Lemma~\ref{L:key2}, we get 
\begin{equation}\label{eqn:Composition1}
\pi^4_{\tr} \circ v_4(\mathcal E^R) \circ \pi^4_{\tr'} \circ v_4(\mathcal E) \circ \pi^4_{\tr} = \pi^4_{\tr}.
\end{equation} 
Similarly, from \eqref{E:EER2}, together with the weight argument, we obtain 
\begin{equation}\label{eqn:Composition2}
\pi^4_{\tr'} \circ v_4(\mathcal E) \circ \pi^4_{\tr} \circ v_4(\mathcal E^R) \circ \pi^4_{\tr'} = \pi^4_{\tr'}.
\end{equation} 
The equalities \eqref{eqn:Composition1} and \eqref{eqn:Composition2} say nothing but that $\pi^4_{\tr'} \circ v_4(\mathcal E) \circ \pi^4_{\tr}$ and $ \pi^4_{\tr} \circ v_4(\mathcal E^R) \circ \pi^4_{\tr'}$ define inverse isomorphisms between $\h^4_{\tr}(X)$ and $\h^4_{\tr}(X')$.

It remains to show that 
$${}^t\big(\pi^4_{\tr'} \circ v_4(\mathcal E) \circ \pi^4_{\tr} \big)= \pi^4_{\tr} \circ v_4(\mathcal E^R) \circ \pi^4_{\tr'},$$
or equivalently that 
\begin{equation}\label{E:desired}
\pi^4_{\tr} \circ v_4(\mathcal E) \circ \pi^4_{\tr'} = \pi^4_{\tr} \circ v_4(\mathcal E^R) \circ \pi^4_{\tr'}.
\end{equation}
We will actually show the following stronger equality
\begin{equation}\label{E:desired+}
\pi^4_{\prim} \circ v_4(\mathcal E) \circ \pi^4_{\prim'} = \pi^4_{\prim} \circ v_4(\mathcal E^R) \circ \pi^4_{\prim'}.
\end{equation}
To see that \eqref{E:desired+} indeed implies \eqref{E:desired}, it is enough to compose both sides of \eqref{E:desired+} on the left with $\pi^4_{\tr}$ and on the right with $\pi^{4}_{\tr'}$, and then to use \eqref{E:commute}.

Let us show \eqref{E:desired+}. Denoting $h_{X}, h_{X'}\in \CH^{1}(X\times_K X')$ the pull-backs of the hyperplane section classes on  $X$ and $X'$ via the natural projections, we have (see \cite[Lemma 5.41]{MR2244106}) $$v(\mathcal E^R) = v(\mathcal E^\vee \otimes p_X^*\omega_X[4]) =v(\mathcal E^\vee)\cdot \exp(-3h_X)=v(\mathcal E)^{\vee}\cdot \exp\left(\frac{3}{2}(h_{X'}-h_{X})\right).$$ This yields the identity
\begin{multline*}
v_4(\mathcal E^R) = v_4(\mathcal E) + v_3(\mathcal E) \cdot \frac{3}{2}(h_X-h_{X'}) + v_2(\mathcal E)\cdot \frac{(\frac{3}{2})^2}{2!}(h_X-h_{X'})^2 \\+ v_1(\mathcal E)\cdot \frac{(\frac{3}{2})^3}{3!}(h_X-h_{X'})^3 + v_0(\mathcal E)\cdot \frac{(\frac{3}{2})^4}{4!}(h_X-h_{X'})^4.
\end{multline*}
Therefore, to establish \eqref{E:desired+}, it suffices to show the following lemma. 
\begin{lem}\label{lem:hkill}
For any $Z \in \CH^3(X\times_K X')$, we have $\pi^4_{\prim} \circ (Z\cdot h_X) = 0$ and $(Z\cdot h_{X'}) \circ\pi^4_{\prim'}= 0$
\end{lem}
\begin{proof}
We only show the first vanishing; the second one can be proved similarly.
Note that  $\pi^4_{\prim} \circ (Z\cdot h_X) = \pi^4_{\prim} \circ ((\Delta_X)_*(h_{X})) \circ {}^{t}Z$. However, by applying the excess intersection formula  \cite[Theorem~6.3]{MR1644323} 
 to the following cartesian diagram
 with excess normal bundle $\mathcal{O}_{X}(3)$:
 \begin{equation*}
 \xymatrix{
 	X \ar[d]\ar[r]^-{\Delta_{X}}& X\times_K X\ar[d]\\
 	\PP^{5} \ar[r]& \PP^{5}\times_K \PP^{5},
 }
 \end{equation*}
 we obtain that 
$(\Delta_X)_*(3h_{X}) = \Delta_{\PP^{5}}|_{X\times X}= \sum_i h_X^i\times h_X^{5-i}$, where the latter equality uses  the relation $\Delta_{\PP^{5}}=\sum_{i=0}^{5}h^i \times
 h^j$ in
 $\CH^{5}(\PP^{5}\times \PP^{5})$, where $h$ is a
 hyperplane class of $\PP^5$.  We can conclude by noting that for any $i$,  we have $\pi^4_{\prim} \circ (h_X^i\times h_X^{5-i})= 0$ by construction of $\pi^4_{\prim}$.
\end{proof}
With Lemma \ref{lem:hkill} being proved, the equality \eqref{E:desired+}, hence also \eqref{E:desired}, is established. The proof of Theorem~\ref{T:GammaTrans} is complete. 
\end{proof}

\section{Proof of Theorem~\ref{T:cubics}} \label{S:mainproof}

Proposition~\ref{P:cubicsmotives} below, in particular, upgrades the quadratic space object isomorphism of Theorem~\ref{T:GammaTrans} to a quadratic space object isomorphism $\h(X)\simeq \h(X')$.

%
%

\begin{prop}\label{P:cubicsmotives}
	Let $X$ and $X'$ be two smooth cubic fourfolds  over a field $K$, whose Kuznetsov components are Fourier--Mukai equivalent. Then their Chow motives are isomorphic.
	More precisely, there exists a correspondence $\Gamma \in \CH^4(X\times_K X')$ such that $\Gamma_* h_X^i = h_{X'}^i$ for all $i\geq 0$ which in addition induces an isomorphism of Chow motives
	$$\begin{tikzcd}
	\Gamma : \ \h(X) \arrow{r}{\simeq} & \h(X')
	\end{tikzcd}
	$$ with inverse given by its transpose ${}^{t}\Gamma$.
\end{prop}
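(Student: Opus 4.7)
The plan is to exhibit $\Gamma$ as a sum $\Gamma = \Gamma_{\mathrm{Tate}} + \Gamma_{\prim}$ where
\[
\Gamma_{\mathrm{Tate}} := \frac{1}{3}\sum_{i=0}^{4} h_X^{4-i} \times h_{X'}^i, \qquad \Gamma_{\prim} := \pi^4_{\prim'} \circ v_4(\mathcal{E}) \circ \pi^4_{\prim},
\]
viewed as correspondences in $\CH^4(X\times_K X')$, with $\mathcal{E}\in \D^b(X\times_K X')$ a Fourier--Mukai kernel inducing $\AA_X\simeq\AA_{X'}$. The Tate part is designed to enforce $\Gamma_* h_X^i = h_{X'}^i$, while $\Gamma_{\prim}$ extends Theorem~\ref{T:GammaTrans} from the transcendental summand $\h^4_{\tr}$ to the full primitive motive $\h^4_{\prim}$.

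The identity $\Gamma_* h_X^i = h_{X'}^i$ for $0\le i\le 4$ is then immediate: only the $i$-th summand of $\Gamma_{\mathrm{Tate}}$ contributes (using $\deg h_X^4 = 3$), while $\Gamma_{\prim}$ vanishes on every power of $h_X$ because $\pi^4_{\prim} = \pi^4_X - \tfrac{1}{3}h_X^2\times h_X^2$ does. The substantive task is then to verify ${}^t\Gamma \circ \Gamma = \Delta_X$ and, symmetrically, $\Gamma \circ {}^t\Gamma = \Delta_{X'}$, which I propose to do by expanding each composition into four pieces.

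For the Tate-Tate piece, the standard correspondence-composition formula $(\alpha\times\beta)\circ(\gamma\times\delta) = \deg(\delta\cdot\alpha)\cdot(\gamma\times\beta)$ unfolds the composition to
\[
{}^t\Gamma_{\mathrm{Tate}}\circ \Gamma_{\mathrm{Tate}} = \frac{1}{3}\sum_{i=0}^{4} h_X^i\times h_X^{4-i} = \Delta_X - \pi^4_{\prim},
\]
using the projector formulas~\eqref{eq:CKcubic} and the relation $\pi^4_X = \pi^4_{\prim} + \tfrac{1}{3} h_X^2\times h_X^2$. The two cross terms vanish: one has $\pi^4_{\prim'}\circ(h_X^{4-i}\times h_{X'}^i) = h_X^{4-i}\times(\pi^4_{\prim'})_*(h_{X'}^i) = 0$ since $\pi^4_{\prim'}$ annihilates each power of $h_{X'}$, whence $\pi^4_{\prim'}\circ\Gamma_{\mathrm{Tate}}=0$ and therefore ${}^t\Gamma_{\prim}\circ\Gamma_{\mathrm{Tate}}=0$; dually, the image of $\pi^4_{\prim'}$ consists of primitive codimension-$2$ classes on $X'$, which pair trivially with each $h_{X'}^{4-i}$, so ${}^t\Gamma_{\mathrm{Tate}}\circ\pi^4_{\prim'}=0$ and therefore ${}^t\Gamma_{\mathrm{Tate}}\circ\Gamma_{\prim}=0$.

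The remaining Prim-Prim identity ${}^t\Gamma_{\prim}\circ\Gamma_{\prim} = \pi^4_{\prim}$ is the main obstacle. I propose to re-run the proof of Theorem~\ref{T:GammaTrans} essentially verbatim with $\pi^4_{\prim}$ in place of $\pi^4_{\tr}$ throughout; all the necessary ingredients are already available in that form. The Franchetta property for $X\times_K X$ (Proposition~\ref{T:Franchetta}) applies since $\pi^4_{\prim}$ is defined family-wise (in contrast to $\pi^4_{\tr}$), Lemma~\ref{L:key} provides the basic identity $\pi^4_{\prim}\circ v_4(\P_X^L)\circ\pi^4_{\prim} = \pi^4_{\prim}$, and the transpose relation~\eqref{E:desired+} is already phrased at the level of $\pi^4_{\prim}$. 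The required semi-orthogonality $\Hom(\h^4_{\prim}(X),\h^4_{\prim}(X')(-l))=0$ for $l>0$ follows from Proposition~\ref{prop:orthogonality} by decomposing $\h^4_{\prim}$ into its transcendental summand and a direct sum of copies of $\1(-2)$. The weight argument of Lemma~\ref{lem:weightargument} applied to the chain of correspondences coming from~\eqref{E:EER} collapses everything to its weight-$4$ components, and the Franchetta-lifted identity $\pi^4_{X}\circ v_4(\P_X^L)\circ\pi^4_{X} = \pi^4_{\prim}$ (which holds cohomologically since $v_4(\P_X^L)$ projects $\HH^4(X)$ onto $\HH^4_{\prim}(X)$, and lifts to Chow by Franchetta on $X\times_K X$) then reduces the sequence to $\pi^4_{\prim}\circ v_4(\mathcal{E}^R)\circ\pi^4_{\prim'}\circ v_4(\mathcal{E})\circ\pi^4_{\prim} = \pi^4_{\prim}$; combined with~\eqref{E:desired+} this yields ${}^t\Gamma_{\prim}\circ\Gamma_{\prim} = \pi^4_{\prim}$. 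Summing all four pieces gives ${}^t\Gamma\circ\Gamma = (\Delta_X - \pi^4_{\prim}) + 0 + 0 + \pi^4_{\prim} = \Delta_X$, and symmetrically $\Gamma\circ{}^t\Gamma = \Delta_{X'}$, so $\Gamma$ is an isomorphism with inverse ${}^t\Gamma$ as claimed.
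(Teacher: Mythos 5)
Your proposal splits $\Gamma = \Gamma_{\mathrm{Tate}} + \Gamma_{\prim}$ and tries to handle the primitive piece by re-running Theorem~\ref{T:GammaTrans} with $\pi^4_{\prim}$ in place of $\pi^4_{\tr}$. This is precisely the shortcut the paper's proof avoids, and there is a genuine gap. Concretely, two things break in the Prim--Prim step. First, the claimed ``Franchetta-lifted identity'' $\pi^4_{X}\circ v_4(\P_X^L)\circ\pi^4_{X} = \pi^4_{\prim}$ is false already in cohomology. By Lemma~\ref{lemma:ProjectorPL}, $[v(\P_X^L)]_*$ is the orthogonal projection onto $\langle v(\mathcal{O}), v(\mathcal{O}(1)), v(\mathcal{O}(2))\rangle^\perp$, so $[v(\P_X^L)]_*(h_X^2)$ is the component of $h_X^2$ in $\langle\lambda_1(h_X),\lambda_2(h_X)\rangle$ (Proposition~\ref{P:num}); writing $h_X^2 = \sum c_i v(\mathcal{O}(i)) + a\lambda_1 + b\lambda_2$ and solving gives $a,b$ with $-\tfrac{7}{32}a + \tfrac{15}{32}b \neq 0$, so $[v_4(\P_X^L)]_*(h_X^2)$ is a nonzero multiple of $h_X^2$. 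Lemma~\ref{L:key} only controls $v_4(\P_X^L)$ after pre- and post-composing with $\pi^4_{\prim}$, not with $\pi^4_X$. Second, and more structurally: after the weight argument collapses the chain to its $\h^4$-components, in order to then insert $\pi^4_{\prim}$ at each intermediate step you would need the decomposition $\h^4 = \h^4_{\prim}\oplus\1(-2)$ to be orthogonal, i.e.\ $\Hom(\h^4_{\prim}(X),\1(-2))=\Hom(\1(-2),\h^4_{\prim}(X))=0$. But these Hom-groups equal $\CH^2_{\prim}(X_{\bar K})$, which is nonzero precisely for special cubic fourfolds --- that is, precisely in the cases where the proposition has non-trivial content. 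The needed orthogonality holds for $\h^4_{\tr}$ (Proposition~\ref{prop:orthogonality}(ii)), not for $\h^4_{\prim}$.

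The paper's actual proof decomposes $\h^4 = \h^4_{\tr}\oplus\h^4_{\alg}$, uses Theorem~\ref{T:GammaTrans} only for the transcendental piece, and constructs $\Gamma^4_{\alg}:\h^4_{\alg}(X)\to\h^4_{\alg}(X')$ by a separate argument: the FM equivalence yields a Galois-equivariant isometry $\overline{\CH}^*(\AA_{X_{\bar K}})\to\overline{\CH}^*(\AA_{X'_{\bar K}})$, and the orthogonal decomposition $\overline{\CH}^*(\AA_{X_{\bar K}}) = \langle\lambda_1,\lambda_2\rangle\oPerp\overline{\CH}^2_{\prim}(X_{\bar K})$ of Proposition~\ref{P:num} combined with the equivariant Witt theorem (Theorem~\ref{T:Witt}) cancels the $A_2$-lattices and produces a Galois-equivariant isometry $\overline{\CH}^2_{\prim}(X_{\bar K})\to\overline{\CH}^2_{\prim}(X'_{\bar K})$. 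That Witt-cancellation step has no analogue in your argument and cannot be bypassed by the Franchetta/weight machinery alone.
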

\begin{proof}
%
As a first step, we construct an isomorphism $\Gamma_{\alg}^{4} : \h^4_{\alg}(X) \to \h^4_{\alg}(X')$ of quadratic space objects. 
Let $\Phi: \mathcal{A}_X \to \mathcal{A}_{X'}$  be the Fourier--Mukai equivalence. It induces a homomorphism 
$$
	\begin{tikzcd}
\overline{\CH}(\mathcal{A}_{X_{\bar{K}}})  \arrow{r}{\simeq}  & \overline{\CH}(\mathcal{A}_{X'_{\bar{K}}}),
\end{tikzcd} \\
\begin{tikzcd}
\bar{v}(E)   \arrow[r, mapsto] & \bar{v}(\Phi(E))
\end{tikzcd}
$$
which is clearly an isometry with respect to the Mukai pairings ($\langle \bar{v}(E), \bar{v}(E')\rangle=\chi(E, E')=\chi(\Phi(E), \Phi(E'))=\langle \bar{v}(\Phi(E)), \bar{v}(\Phi(E'))\rangle$)
and is equivariant with respect to the action of the absolute Galois group of $K$ (since the Fourier--Mukai kernel is defined over $K$). Recall from  Proposition~\ref{P:num} that we have an orthogonal decomposition 
	$$	\overline{\CH}(\mathcal A_{X_{\bar{K}}}) = \langle \lambda_1(h_X),\lambda_2(h_X)\rangle \oPerp \overline{\CH}\,^2_{\prim}(X_{\bar K})$$ 
		with respect to the Mukai pairing.
Since the planes $\langle \lambda_1(h_X),\lambda_2(h_X)\rangle$ and $\langle \lambda_1(h_{X'}),\lambda_2(h_{X'})\rangle$ consist of Galois-invariant elements and are isometric to one another, we obtain from Theorem~\ref{T:Witt}, which is an equivariant Witt theorem,  a Galois-equivariant isometry 
	\[\begin{tikzcd}
\phi \ : \ \overline{\CH}\,^2_{\prim}(X_{\bar K})   \arrow{r}{\simeq}  & \overline{\CH}\,^2_{\prim}(X'_{\bar K}).
\end{tikzcd}
\]
(Note that Theorem \ref{T:Witt} is stated for finite groups, but it indeed applies here: all the numerical Chow groups involved are finitely generated, hence the Galois group action factors through the Galois group of some common finite extension $K'/K$.)
Let then $\{\alpha_1, \ldots, \alpha_r\}$ be an orthogonal basis of $\overline{\CH}\,^2_{\prim}(X_{\bar K}) $.
Having in mind that the Mukai pairing agrees with the intersection pairing on $\overline{\CH}\,^2(X_{\bar K})$ and that $\CH^2(X_{\bar K}) = \overline{\CH}\,^2(X_{\bar K})$, we see, together with the construction and definition of $\h_{\alg}^{4}$ (see \eqref{eqn:pi4alg}),  that the correspondence 
\begin{equation}\label{E:gamma4alg}
\Gamma_{\alg}^{4}:=\frac{1}{3}h_{X}^{2}\times h_{X'}^{2}+\sum_{i=1}^{r}\frac{1}{\deg(\alpha^{2}_{i})} \alpha_{i}\times \phi(\alpha_{i})\quad \in \CH^{4}(X_{\bar{K}}\times_{\bar K} X'_{\bar K})
\end{equation}
%
%
is defined over $K$
 and defines an isomorphism $\h^4_{\alg}(X) \stackrel{\simeq}{\longrightarrow} \h^4_{\alg}(X')$ with inverse given by its transpose~${}^{t}\Gamma_{\alg}^{4}$.

	Finally, combining $\Gamma_{\alg}^{4}$ with $\Gamma_{\tr}$ of Theorem~\ref{T:GammaTrans}, the cycle $$\Gamma:=\frac{1}{3}h_{X}^{4}\times X'+ \frac{1}{3}h_{X}^{3}\times h_{X'}+\Gamma_{\alg}^{4}+\Gamma_{\tr}+ \frac{1}{3}h_{X}\times h_{X'}^{3}+\frac{1}{3}X\times h_{X'}^{4}\in \CH^{4}(X\times X')$$ induces an isomorphism between $\h(X)$ and $\h(X')$, and its inverse is ${}^{t}\Gamma$. Furthermore, by construction, we have $\Gamma_{*}(h_{X}^{i})=h_{X'}^{i}$ for all $i$.
\end{proof}

\begin{rmk}\label{R:complex}
In the case where $K=\C$ and $\HH^*$ is Betti cohomology, the construction of the isomorphism $\Gamma^4_{\alg} : \h^4_{\alg}(X) \to \h^4_{\alg}(X')$ in the proof of Proposition~\ref{P:cubicsmotives} is somewhat simpler. 
As a consequence of Theorem~\ref{T:GammaTrans}, we have a Hodge isometry
\begin{equation}\label{E:trC}
\HH^{4}_{\tr}(X, \Q) \simeq\HH^{4}_{\tr}(X', \Q).
\end{equation}
(This Hodge isometry can also be obtained by considering the transcendental part of \cite[Proposition~3.4]{MR3705236}.)
Since $\HH^4(X,\Q)$ and $\HH^4(X',\Q)$ are isometric for all smooth complex cubic fourfolds, there is by Witt's theorem an isometry 
\begin{equation}\label{E:algC}
\phi: \HH^{4}_{\alg}(X, \Q)\xrightarrow{\simeq} \HH^{4}_{\alg}(X', \Q)
\end{equation}
sending $h_{X}^{2}$ to $h_{X'}^{2}$. Let $\{h_{X}^{2}, \alpha_{1}, \dots, \alpha_{r}\}$ be an orthogonal basis of $\HH^{4}_{\alg}(X, \Q)$.
 The correspondence $\Gamma^4_{\alg}$ of~\eqref{E:gamma4alg} then provides an isomorphism from 
$\h_{\alg}^{4}(X)$ to $ \h_{\alg}^{4}(X')$, whose inverse is given by its transpose ${}^{t}\Gamma_{\alg}^{4}$. Note that, by combining \eqref{E:trC} and \eqref{E:algC}, we obtain a Hodge isometry $\HH^4(X,\Q) \simeq \HH^4(X',\Q)$.
\end{rmk}

Theorem~\ref{T:cubics} then follows from combining Proposition~\ref{P:cubicsmotives} with the following proposition.
\begin{prop} \label{P:upgradeFrob}
	Let $X$ and $X'$ be two smooth cubic fourfolds. Assume that there exists a correspondence $\Gamma \in \CH^4(X\times_K X')$ such that $\Gamma_* h_X^i = h_{X'}^i$ for all $i\geq 0$ which in addition induces an isomorphism 
$$\begin{tikzcd}
\Gamma : \ \h(X) \arrow{r}{\simeq} & \h(X')
\end{tikzcd}
$$ with inverse given by its transpose. Then $\Gamma$ is an isomorphism of Chow motives, as Frobenius algebra objects.
\end{prop}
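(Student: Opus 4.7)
The plan is to verify, one at a time, the compatibility of $\Gamma$ with each of the three constituents of the Frobenius algebra structure of $\h(X)$ and $\h(X')$: the non-degenerate quadratic form, the unit, and the multiplication. The quadratic-form compatibility is handed to us by Proposition~\ref{P:quadratic} applied to the hypothesis $\Gamma^{-1} = {}^t\Gamma$; the unit compatibility is immediate, since the unit morphism $\mathds{1} \to \h(X)$ is $[X] = h_X^0$ and $\Gamma_* h_X^0 = h_{X'}^0$ is the $i=0$ case of the hypothesis.

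The substance of the proof lies in the multiplicative compatibility. Because the non-degenerate quadratic form is already preserved, a standard Frobenius algebra argument (using $\tau_X = q_X \circ \mu_X^{(2)}$ and the non-degeneracy of $q_X$) reduces this to showing that $\Gamma$ intertwines the associated trilinear forms $\tau_X(a,b,c) := \int_X a \cdot b \cdot c$ and $\tau_{X'}$. Under the identification $\Hom\big(\h(X)^{\otimes 3}, \mathds{1}(-4)\big) \simeq \CH^8(X^3)$, the form $\tau_X$ corresponds to the small diagonal $\delta_X$ (and likewise for $X'$), and this intertwining translates into the single cycle identity
\[
(\Gamma \times \Gamma \times \Gamma)_* \delta_X = \delta_{X'} \qquad \text{in } \CH^8((X')^3).
\]

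To prove this identity, the key input is the multiplicative Chow--K\"unneth relation \eqref{E:mult} of Theorem~\ref{thm:MCKcubic}, which expresses $\delta_X$ explicitly as a sum of terms of the form $\tfrac{1}{3} p_{ij}^*\Delta_X \cdot p_k^* h_X^4$ together with a symmetric polynomial term $P(p_1^* h_X, p_2^* h_X, p_3^* h_X)$. Each summand is of \emph{product type}, so that its push-forward under $\Gamma \times \Gamma \times \Gamma$ decomposes factor-by-factor and is controlled by two transparent identities: first, $(\Gamma \times \Gamma)_* \Delta_X = \Delta_{X'}$, which is a direct computation identifying the left-hand side with the composition of correspondences $\Gamma \circ {}^t\Gamma$ and invoking $\Gamma^{-1} = {}^t\Gamma$; second, $\Gamma_* h_X^i = h_{X'}^i$ for all $i \geq 0$, which is the remaining part of the hypothesis. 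Reassembling via \eqref{E:mult} applied now to $X'$ then yields $\delta_{X'}$, as required.

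The main obstacle, and the reason the strategy necessarily goes through the MCK relation, is that \emph{a priori} the multiplication on $\h(X)$ is opaque: it is encoded by the full class $\delta_X \in \CH^8(X^3)$, and without further information about this cycle there is no reason its push-forward by $\Gamma^{\otimes 3}$ should be $\delta_{X'}$. Theorem~\ref{thm:MCKcubic} is precisely what allows one to replace $\delta_X$ by cycles of the form $\Delta_X \times h_X^\bullet$ and monomials in pull-backs of $h_X$, i.e.\ by cycles whose behaviour under $\Gamma^{\otimes 3}$ is pinned down by the two hypotheses of the proposition.
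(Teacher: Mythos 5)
Your proof is correct and takes essentially the same route as the paper's. The paper shortcuts your reduction by directly citing \cite[Proposition~2.11]{FV2}, which states that it suffices to check $\Gamma$ is an isomorphism, $(\Gamma\otimes\Gamma)_*\Delta_X = \Delta_{X'}$, and $(\Gamma\otimes\Gamma\otimes\Gamma)_*\delta_X = \delta_{X'}$; you re-derive this reduction from first principles, but the substance — feeding the multiplicative Chow--K\"unneth relation \eqref{E:mult} into $(\Gamma^{\otimes 3})_*\delta_X$ and using $(\Gamma\otimes\Gamma)_*\Delta_X = \Delta_{X'}$ together with $\Gamma_*h_X^i = h_{X'}^i$ to reassemble $\delta_{X'}$ — is identical.
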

\begin{proof}
Recall in general \cite[Proposition~2.11]{FV2} that a morphism $\Gamma: \h(X) \to \h(X')$ between the Chow motives of smooth projective varieties of same dimension is an isomorphism of Chow motives, as Frobenius algebra objects, if $\Gamma$ is an isomorphism of Chow motives, $(\Gamma\otimes \Gamma)_*\Delta_X = \Delta_{X'}$ and $(\Gamma \otimes \Gamma \otimes \Gamma)_* \delta_X =\delta_{X'}$, where $\delta$ denotes the small diagonal.
Let now $\Gamma$ be as in the statement of the proposition. That $\Gamma$ defines an isomorphism with inverse given by its transpose is equivalent to $\Gamma$ is an isomorphism and $(\Gamma\otimes \Gamma)_*\Delta_X = \Delta_{X'}$. Therefore, we only need to check that 
 $$(\Gamma \otimes \Gamma \otimes \Gamma)_* \delta_X =\delta_{X'}.$$
 However, by Theorem~\ref{thm:MCKcubic}, and using the assumption that $\Gamma_* h_X^i = h_{X'}^i$ for all $i\geq 0$, we have 
 	\begin{align*}
(\Gamma \otimes \Gamma \otimes \Gamma)_*   \delta_X = & \  \frac{1}{3} \big( p_{12}^*(\Gamma \otimes \Gamma)_* \Delta_X\cdot p_3^*h_{X'}^4 +   \operatorname{perm.} \big)+  P\big(p_{1}^*h_{X'}, p_2^*h_{X'}, p_3^*h_{X'} \big)\\
 = & \  \frac{1}{3} \big( p_{12}^*\Delta_{X'}\cdot p_3^*h_{X'}^4 +  \operatorname{perm.} \big)+  P\big(p_{1}^*h_{X'}, p_2^*h_{X'}, p_3^*h_{X'} \big)\\
 = & \ \delta_{X'},
 \end{align*} 
 where in the second equality we have used the identity 
 $(\Gamma\otimes \Gamma)_*\Delta_X = \Delta_{X'}$.
\end{proof}

\section{Cubic fourfolds with associated K3 surfaces}

Let $X$ be a smooth cubic fourfold over a field $K$ and let $\AA_{X}$ be the Kuznetsov component of $\D^{b}(X)$ as before. Assume that there exists a K3 surface $S$ endowed with a Brauer class $\alpha\in \operatorname{Br}(X)$, such that $\AA_{X}$ is Fourier--Mukai equivalent to $\D^{b}(S,\alpha)$. 
That is, there exists an object $\mathcal{E}\in D^{b}(X\times S, 1\times \alpha)$, such that the composition
\begin{equation*}
\begin{tikzcd}
\AA(X) \arrow[hook]{r}{i_{X}} &\D^{b}(X) \arrow{r}{\Phi_{\mathcal{E}}} &\D^{b}(S, \alpha)
\end{tikzcd}
\end{equation*}
is an equivalence of triangulated categories, where $i_{X}$ is the natural inclusion. The goal of this section is to prove Theorem \ref{T:cubicK3}. The proof is similar to that of Theorem~\ref{T:cubics-quadratic} and we will only sketch the main steps. 
 In the sequel, let us omit $\alpha$ from the notation, since the proof for the twisted case is the same as the untwisted case. \medskip

The right adjoint of the functor $\Phi_{\mathcal{E}}\circ i_{X}$ is $i_{X}^{!}\circ\Phi_{\mathcal{E}^{R}}$. Hence the hypothesis implies that 
\begin{equation*}
 i_{X}^{!}\circ\Phi_{\mathcal{E}^{R}}\circ \Phi_{\mathcal{E}}\circ i_{X}\simeq \operatorname{id}_{\AA_{X}}\,;
 \end{equation*}
 \begin{equation*}
  \Phi_{\mathcal{E}}\circ i_{X}\circ i_{X}^{!}\circ\Phi_{\mathcal{E}^{R}}\simeq \operatorname{id}_{\D^{b}(S)}.
 \end{equation*}
By the definition of $p_X^L$ and $p_X^R$ in Section \ref{sec:Projectors}, we obtain
\begin{equation}\label{eqn:FunctorToCycle1}
p^{R}_{X}\circ\Phi_{\mathcal{E}^{R}}\circ \Phi_{\mathcal{E}}\circ p_X^L\simeq p_X^L;
 \end{equation}
 \begin{equation}\label{eqn:FunctorToCycle2}
  \Phi_{\mathcal{E}}\circ p_X^L\circ p_X^R\circ\Phi_{\mathcal{E}^{R}}\simeq \operatorname{id}_{\D^{b}(S)}.
 \end{equation}
 Recall that $\mathcal{P}^{L}_{X}, \mathcal{P}^{R}_{X}\in \D^{b}(X\times_K X)$ are the Fourier--Mukai kernels of the functors $p_X^L$ and $p_X^R$ respectively. 
 As in the proof of Theorem \ref{T:GammaTrans}, using \cite[Theorem~6.4]{CS2}, we deduce from \eqref{eqn:FunctorToCycle1} that in $\CH^*(X\times_K X)$,
 \begin{equation}\label{eqn:ComposeToIdMixedDeg1}
 v(\mathcal{P}^{R}_{X})\circ v(\mathcal{E}^{R})\circ v(\mathcal{E})\circ v(\mathcal{P}^{L}_{X})=v(\mathcal{P}^{L}_{X}),
 \end{equation}
where $v$ denotes the Chow-theoretic Mukai vector map. 
Likewise, using \cite[Theorem~6.4]{CS2}, or alternately by the uniqueness of the Fourier--Mukai kernel in the twisted version of Orlov's Theorem (\cite[Theorem~1.1]{CS1}), \eqref{eqn:FunctorToCycle2} implies that 
\begin{equation}\label{eqn:ComposeToIdMixedDeg2}
v(\mathcal{E})\circ v(\mathcal{P}^{L}_{X})\circ v(\mathcal{P}^{R}_{X})\circ v(\mathcal{E}^{R})=\Delta_{S}.
\end{equation}
\smallskip

As in Section~\ref{S:FM-tr}, we define a refined Chow--K\"unneth decomposition for $S$. The general case of a smooth projective surface over $K$ is due to \cite[\S 7.2.2]{KMP}. Since for a K3 surface rational and numerical equivalence agree on $\CH^1(S_{\bar K})$, we can in fact construct such a refined Chow--K\"unneth decomposition in a more direct way. First, choose any degree-1 zero-cycle $o\in \CH_0(S)$, and define the Chow--K\"unneth decomposition
$$\pi^0_S := o\times S, \quad \pi^4_S := S\times o, \quad \mbox{and} \quad \pi^2_S := \Delta_S - \pi^0_S - \pi^4_S.$$
 Let $\{\beta_1,\ldots, \beta_s\}$ be an orthogonal basis for $\CH^1(S_{\bar K})$. 
 The correspondence 
\begin{equation}\label{eqn:pi2alg}
\pi^2_{\alg,S} := \sum_{i=1}^{s} \frac{1}{\deg(\beta_i\cdot \beta_i)} \beta_i \times \beta_i
\end{equation} then defines an idempotent in $\CH^2(S_{\bar K}\times_{\bar K} S_{\bar K})$ which descends
to $K$, which commutes with $\pi^2_S$  and which cohomologically is the orthogonal projector on the subspace $\operatorname{Im}\big(\CH^1(S_{\bar K}) \to \HH^2(S)\big)$ spanned by $\bar K$-algebraic classes in $\HH^2(S)$. In addition, we have $\pi^2_{\alg,S} \circ \pi^2_{S} = \pi^2_{S} \circ \pi^2_{\alg,S} = \pi^2_{\alg,S}$.

We then define $$\pi^2_{\tr,S} := \pi^2_S - \pi^2_{\alg,S}.$$ It is an idempotent correspondence in $\CH^2(S\times_K S)$ which cohomologically is the orthogonal projector on the \emph{transcendental cohomology} $\HH^2_{\tr}(S)$, i.e., by definition of transcendental cohomology, the orthogonal projector on the orthogonal complement to the $\bar K$-algebraic classes in $\HH^2(S)$.

Denote by $\h^{i}(S)$, $\h^{2}_{\tr}(S)$ and $\h^{2}_{\alg}(S)$ the Chow motives $(S, \pi^{i}_{S})$, $(S, \pi^{2}_{\tr,S})$, and $(S, \pi^{2}_{\alg, S})$ respectively. From the above, we get the following refined Chow--K\"unneth decomposition\,:
\begin{equation*}\label{eqn:WeightDecompS}
\h(S)=\h^{0}(S)\oplus \h^{2}_{\alg}(S)\oplus\h^{2}_{\tr}(S)\oplus\h^{4}(S),
\end{equation*} 
where $\h^{2i}(X)\simeq \1(-i)$ for $i=0, 2$ and the base-change to $\bar K$ of $\h^{2}_{\alg}(S)$ is a direct sum of copies of $\1(-1)$.
\medskip

Now, as in the case of two cubic fourfolds, we want to apply the weight argument (Lemma~\ref{lem:weightargument}) to the equalities \eqref{eqn:ComposeToIdMixedDeg1} and \eqref{eqn:ComposeToIdMixedDeg2}. To this end, we need the following complement to Proposition~\ref{prop:orthogonality}.

\begin{prop}\label{prop:orthogonality2}
Let $X$ be a cubic fourfold and $S$ a projective surface. 
Then for all $l>1$,
$$\Hom\left(\h^{4}_{\tr}(X), \h^{2}_{\tr}(S)(-l)\right)=0.$$
\end{prop}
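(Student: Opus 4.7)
The plan is to mirror the proof of Proposition~\ref{prop:orthogonality}$(iii)$, reducing everything to an application of Lemma~\ref{L:vanishing}. The first ingredient is the vanishing $\CH_0(\h^4_{\tr}(X_\Omega)) = 0$ combined with the symmetry $\pi^4_{\tr} = {}^t\pi^4_{\tr}$; by \cite[Corollary~2.2]{VialCK} these two facts yield that $\h^4_{\tr}(X)(1)$ is isomorphic to a direct summand $N$ of the Chow motive $\h(Y)$ of some surface $Y$. Consequently, $N$ is an effective motive whose cohomology is concentrated in degree~$2$, and $\h^4_{\tr}(X) \simeq N(-1)$.

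Next, I would rewrite the Hom-group on the left-hand side as the Chow group of a suitable effective motive. Using rigidity, the identification $\h^4_{\tr}(X)^\vee \simeq \h^4_{\tr}(X)(4)$ (which follows from $\dim X = 4$ and ${}^t\pi^4_{\tr} = \pi^4_{\tr}$), the standard formula $\Hom(\mathds{1}, M(a)) = \CH^a(M)$, and the isomorphism $\h^4_{\tr}(X) \simeq N(-1)$, a direct manipulation with Tate twists yields
\[
\Hom\bigl(\h^4_{\tr}(X),\, \h^2_{\tr}(S)(-l)\bigr) \;\simeq\; \CH^{\,3-l}\bigl(N \otimes \h^2_{\tr}(S)\bigr).
\]

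It remains to apply Lemma~\ref{L:vanishing} to the motive $M := N \otimes \h^2_{\tr}(S)$. As a tensor product of two effective motives (realized as a summand of $\h(Y \times S)$), $M$ is effective; and by the K\"unneth formula its cohomology is concentrated in degree $2+2 = 4$, so in particular $\HH^i(M) = 0$ for all $i \leq 2$. Lemma~\ref{L:vanishing} then delivers $\CH^k(M) = 0$ for every $k < 2$, and the hypothesis $l > 1$ is precisely what forces $3 - l < 2$. The only point in the argument that requires genuine care is the bookkeeping with Tate twists in the Hom-to-Chow identification above; everything else is the exact analogue of the weight-based reduction used for Proposition~\ref{prop:orthogonality}$(iii)$.
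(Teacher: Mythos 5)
Your proposal is correct and follows essentially the same route as the paper: write $\h^4_{\tr}(X)(1)$ as a summand $N$ of the motive of a surface (via \cite[Corollary~2.2]{VialCK}), identify the $\Hom$-group with a Chow group of the effective motive $N\otimes\h^2_{\tr}(S)$, whose cohomology is concentrated in degree~4, and invoke Lemma~\ref{L:vanishing}. The paper's proof is terser (it simply points back to the proof of Proposition~\ref{prop:orthogonality}(iii) and then cites Lemma~\ref{L:vanishing}), while you spell out the Tate-twist bookkeeping leading to $\CH^{3-l}(N\otimes\h^2_{\tr}(S))$ — a detail the paper leaves implicit, and which you handle correctly.
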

\begin{proof}
As is pointed out in the proof of Proposition \ref{prop:orthogonality}, $\h^{4}_{\tr}(X)(1)$ is a direct summand of the motive of a surface. Then we can apply Lemma \ref{L:vanishing} to conclude to the vanishing. 
\end{proof}

By the weight argument (Lemma \ref{lem:weightargument}), combined with Proposition \ref{prop:orthogonality},  \cite[Theorem~1.4(ii)]{FV2} and Proposition \ref{prop:orthogonality2}, we can deduce that if we restrict the domain to $\h_{\tr}^{4}(X)$, then each step of \eqref{eqn:ComposeToIdMixedDeg1} factors through $\h_{\tr}^{4}(X)$ or $\h^{2}_{\tr}(S)(-1)$. In other words,
\begin{equation*}\label{eqn:ComposeToIdPureDeg1}
 \pi^4_{\tr, X} \circ v_4(\P_X^R)\circ \pi^4_{\tr, X}  \circ v_3(\mathcal E^R) \circ \pi^2_{\tr, S} \circ v_3(\mathcal E) \circ \pi^4_{\tr, X} \circ v_4(\P_{X}^L) \circ \pi^4_{\tr, X} = \pi^4_{\tr, X}.
\end{equation*} 
By Lemma~\ref{L:key2}, we get
\begin{equation}\label{eqn:ComposeToIdPureDeg1'}
 \pi^4_{\tr, X} \circ v_3(\mathcal E^R) \circ \pi^2_{\tr, S} \circ v_3(\mathcal E) \circ \pi^4_{\tr, X}= \pi^4_{\tr,X}.
\end{equation}
Similarly, \eqref{eqn:ComposeToIdMixedDeg2} implies 
\begin{equation}\label{eqn:ComposeToIdPureDeg2}
 \pi^2_{\tr, S} \circ v_3(\mathcal E) \circ \pi^4_{\tr, X} \circ v_3(\mathcal E^R) \circ \pi^2_{\tr, S}=\pi^2_{\tr, S}.
\end{equation}
Note that \eqref{eqn:ComposeToIdPureDeg1'} and \eqref{eqn:ComposeToIdPureDeg2} together say that we have the following pair of inverse isomorphisms:
\begin{equation}\label{eqn:InverseIso}
 \begin{tikzcd}
 \h^{4}_{\tr}(X) \arrow[shift left]{rrr}{\pi^2_{\tr, S} \circ v_3(\mathcal E) \circ \pi^4_{\tr, X}} &  && \h^{2}_{\tr}(S)(-1)\arrow[shift left]{lll}{\pi^4_{\tr, X} \circ v_3(\mathcal E^R) \circ \pi^2_{\tr, S} }
 \end{tikzcd}
\end{equation}
By the same argument as in the proof of \eqref{E:desired}, using Lemma \ref{lem:hkill}, we can moreover show that the two inverse isomorphisms in \eqref{eqn:InverseIso} are transpose to each other. To summarize, we have proven the following\,:

\begin{thm}\label{T:cubicK3v3}
The correspondence $\Gamma_{\tr} := \pi^2_{\tr,S} \circ v_3(\mathcal E) \circ \pi^4_{\tr,X}$ in $\CH^3(X\times S)$ induces an isomorphism 
$$\begin{tikzcd}
\Gamma_{\tr} : \ \h^4_{\tr}(X)(2) \arrow{r}{\simeq} & \h^2_{\tr}(S)(1)
\end{tikzcd}
$$ whose inverse is its transpose ${}^{t}\Gamma_{\tr}$.\qed
\end{thm}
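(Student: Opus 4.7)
The plan is to parallel the proof of Theorem~\ref{T:cubics-quadratic}, replacing the role of the second cubic fourfold $X'$ by the K3 surface $S$ and the projector $\pi^4_{\tr,X'}$ by $\pi^2_{\tr,S}$. The first task is to set up a refined Chow--K\"unneth decomposition of $S$ separating algebraic from transcendental classes; this is simpler than the cubic case because on a K3 surface rational and numerical equivalence already agree on $\CH^1(S_{\bar K})$, so one splits $\pi^2_S$ directly into $\pi^2_{\alg,S}$ and $\pi^2_{\tr,S}$ by picking an orthogonal basis of $\CH^1(S_{\bar K})$.

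Starting from the FM equivalence $F = i_{X'}^* \circ \Phi_{\mathcal E} \circ i_X$ with inverse $F^R = i_X^! \circ \Phi_{\mathcal E^R}$, I would translate $F^R\circ F \simeq \mathrm{id}_{\AA_X}$ and $F\circ F^R \simeq \mathrm{id}_{\D^b(S)}$ into kernel-level identities $p_X^R \circ \Phi_{\mathcal E^R} \circ \Phi_{\mathcal E} \circ p_X^L \simeq p_X^L$ and $\Phi_{\mathcal E} \circ p_X^L \circ p_X^R \circ \Phi_{\mathcal E^R} \simeq \mathrm{id}_{\D^b(S)}$, and then pass to Mukai vectors. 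The first yields a cohomological identity which the Franchetta property for $X\times X$ (Proposition~\ref{T:Franchetta}) upgrades to an equality in $\CH^*(X\times_K X)$. For the second, since no Franchetta-type property is available on $S\times S$, I appeal instead to Orlov's uniqueness of Fourier--Mukai kernels applied to the identity functor on $\D^b(S)$ to obtain the cycle-level equality $v(\mathcal E)\circ v(\P_X^L)\circ v(\P_X^R)\circ v(\mathcal E^R) = \Delta_S$ directly.

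Next, I would apply the weight argument (Lemma~\ref{lem:weightargument}) to both identities, using Proposition~\ref{prop:orthogonality} on the cubic side, \cite[Theorem~1.4(ii)]{FV2} on the K3 side, and a new mixed vanishing $\Hom(\h^4_{\tr}(X), \h^2_{\tr}(S)(-l)) = 0$ for $l>1$ (which follows from Lemma~\ref{L:vanishing}, as $\h^4_{\tr}(X)(1)$ is a summand of the motive of a surface). Together with Lemma~\ref{L:key2}, these force every intermediate factor in the chain to be of weight~$4$, collapsing the two identities to
\begin{align*}
\pi^4_{\tr,X} \circ v_3(\mathcal E^R) \circ \pi^2_{\tr,S} \circ v_3(\mathcal E) \circ \pi^4_{\tr,X} &= \pi^4_{\tr,X}, \\
\pi^2_{\tr,S} \circ v_3(\mathcal E) \circ \pi^4_{\tr,X} \circ v_3(\mathcal E^R) \circ \pi^2_{\tr,S} &= \pi^2_{\tr,S},
\end{align*}
thereby exhibiting $\Gamma_{\tr}$ and $\pi^4_{\tr,X}\circ v_3(\mathcal E^R)\circ \pi^2_{\tr,S}$ as mutually inverse isomorphisms between $\h^4_{\tr}(X)(2)$ and $\h^2_{\tr}(S)(1)$.

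The remaining step, and the one I expect to be the main obstacle, is to verify that this inverse is the transpose of $\Gamma_{\tr}$. I would imitate the argument used to establish~\eqref{E:desired}: expanding $\mathcal E^R = \mathcal E^\vee \otimes p_X^*\omega_X[4]$, the difference $v_3(\mathcal E^R) - v_3(\mathcal E^\vee)$ is a sum of terms of the form $Z \cdot h_X^i$ with $i\ge 1$, each of which is annihilated on the left by $\pi^4_{\prim,X}$ via Lemma~\ref{lem:hkill}. No analogous issue arises on the $S$-factor, since no hyperplane class from $S$ appears in the adjoint expansion. The delicate point is bookkeeping the Mukai-vector degrees and Tate twists correctly in the asymmetric pair $(X,S)$ so that the transpose comparison takes place in the right Chow group; once this is arranged, the proof concludes exactly as for Theorem~\ref{T:cubics-quadratic}.
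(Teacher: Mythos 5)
Your proposal matches the paper's proof essentially step for step: the refined Chow--K\"unneth decomposition of $S$, the two kernel-level identities with Franchetta on $X\times X$ for one and Orlov's uniqueness on $S$ for the other, the mixed vanishing $\Hom(\h^4_{\tr}(X),\h^2_{\tr}(S)(-l))=0$ via Lemma~\ref{L:vanishing}, the weight argument collapsing to the pair of inverse isomorphisms, and the transpose comparison via Lemma~\ref{lem:hkill}. The only minor slip is writing ``$v_3(\mathcal E^R)-v_3(\mathcal E^\vee)$'' where the relevant comparison is between $v_3(\mathcal E^R)$ and (the transpose of) $v_3(\mathcal E)$, but this is exactly the bookkeeping you flag as delicate and it works out just as in the proof of~\eqref{E:desired}.
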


Via Proposition~\ref{P:quadratic}, Theorem~\ref{T:cubicK3v3} establishes Theorem~\ref{T:cubicK3}.\qed


\appendix
\section{An equivariant Witt theorem}
Throughout the appendix,  $F$ is a field of characteristic different from 2 and all the vector spaces are finite dimensional over $F$. 

Let us first recall the classical Witt theorem. 
Let $V_1, V_2$ be vector spaces equipped with quadratic forms, whose associated bilinear symmetric pairings are denoted by $\langle-, -\rangle$. Suppose that $V_1$ and $V_2$ are isometric and we have orthogonal decompositions 
\[V_1=U_1\oPerp W_1, \quad V_2=U_2\oPerp W_2,\]
such that $U_1$ and $U_2$ are isometric. Then $W_1$ and $W_2$ are also isometric.
This is often referred to as \textit{Witt's cancellation theorem}, which is clearly equivalent to the following \textit{Witt's extension theorem}\,: Let $V$ be a non-degenerate quadratic space and let $f: U\to U'$ be an isometry between two subspaces of $V$. Then $f$ can be extended to an isometry of $V$.
\medskip

The goal of this appendix is to establish an equivariant version of the Witt theorem, in case the quadratic spaces are endowed with a group action. For a quadratic space $V$ with a $G$-action, we denote $O_G(V)$ the group of $G$-equivariant isometries, i.e.~ automorphisms of $V$ that preserve the pairing and commute with the action of $G$.

\begin{lem}\label{lem:transitive}
	Let $V$ be a non-degenerate quadratic space equipped with an isometric action of a finite group $G$. Suppose that $|G|$ is invertible in $F$. Then
	\begin{enumerate}
		\item The restriction of the quadratic form to $V^G$, the $G$-fixed space, is non-degenerate.
		\item For any $x, y\in V^G$ with $\langle x,x\rangle =\langle y,y\rangle\neq 0$, there exists a $G$-equivariant isometry $\phi\in O_G(V)$ sending $x$ to $y$.
	\end{enumerate}
\end{lem}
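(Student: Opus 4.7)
The plan is to use the Reynolds (averaging) projector $\pi \colon V \to V$, $\pi(v) := \tfrac{1}{|G|}\sum_{g \in G} g \cdot v$, which exists because $|G|$ is invertible in $F$. This is a $G$-equivariant idempotent with image exactly $V^G$, and the key compatibility with the form is that for any $x \in V^G$ and any $v \in V$,
$$\langle x, \pi(v)\rangle \;=\; \tfrac{1}{|G|}\sum_g \langle x, g\cdot v\rangle \;=\; \tfrac{1}{|G|}\sum_g \langle g^{-1}\cdot x, v\rangle \;=\; \langle x, v\rangle,$$
using the isometric action and the $G$-invariance of $x$. Part~(1) follows immediately: if $x \in V^G$ pairs trivially with every element of $V^G$, then in particular $\langle x, \pi(v)\rangle = 0$ for every $v \in V$, which by the identity above means $\langle x, v\rangle = 0$ for every $v \in V$, forcing $x = 0$ by non-degeneracy of the ambient form.

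For part~(2), I would imitate the classical Witt argument via reflections, but restricted to reflection vectors lying in $V^G$ so as to preserve equivariance. For any anisotropic $v \in V$ the reflection $\sigma_v(w) := w - \tfrac{2\langle w, v\rangle}{\langle v, v\rangle} v$ is an isometry of $V$, and when $v \in V^G$ a short computation using $g \cdot v = v$ together with isometry of the action shows that $\sigma_v$ commutes with $G$. Given $x, y \in V^G$ with $\langle x,x\rangle = \langle y,y\rangle \neq 0$, the parallelogram identity
$$\langle x+y, x+y\rangle + \langle x-y, x-y\rangle \;=\; 4\langle x,x\rangle \;\neq\; 0$$
forces at least one of $x-y,\, x+y \in V^G$ to be anisotropic. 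If $\langle x-y, x-y\rangle \neq 0$, then $\sigma_{x-y}(x) = y$ directly; otherwise $\langle x+y, x+y\rangle \neq 0$ and $\sigma_{x+y}(x) = -y$, so composing with $\sigma_y$ (which is $G$-equivariant since $y \in V^G$ is anisotropic by hypothesis) yields the required $\phi \in O_G(V)$.

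There is essentially no obstacle beyond bookkeeping: part~(1) guarantees that $V^G$ inherits a non-degenerate form so that anisotropic fixed vectors exist in abundance, and the observation that reflections along $G$-fixed anisotropic vectors are automatically $G$-equivariant lets the classical two-reflection Witt argument run verbatim in the equivariant setting.
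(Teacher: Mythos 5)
Your proof is correct and takes essentially the same route as the paper: part (1) rests on the same averaging computation, and part (2) is the classical two-reflection Witt argument adapted to the equivariant setting. The paper phrases (2) by orthogonally decomposing $V = V^G \oplus (V^G)^\perp$ using part (1) and extending a classical Witt isometry of $V^G$ by the identity, whereas you directly observe that a reflection along an anisotropic vector of $V^G$ is automatically $G$-equivariant on all of $V$; the two constructions yield the same maps, since such a reflection fixes $(V^G)^\perp$ pointwise.
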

\begin{proof}
	For (1), let $x\in \operatorname{rad}(V^G)$,
	 for any $y\in V$, 
	\[
	\langle x, y\rangle=\frac{1}{|G|}\sum_{g\in G}\langle gx,gy\rangle=\langle x,\frac{1}{|G|}\sum_{g\in G}gy\rangle=0,
	\]
	since $\frac{1}{|G|}\sum_{g\in G}gy\in V^G$. Therefore, $x\in \operatorname{rad}(V)=\{0\}$.\\
	For (2), as $x$ and $y$ are anisotropic,  it is well-known that there exists $\phi_1\in O(V^G)$, a reflection or a product of two reflections, which sends $x$ to $y$. By (1), we have an orthogonal decomposition 
	\[V=V^G\oplus (V^G)^\perp.\] Hence we can take $\phi:=\phi_1\oplus \operatorname{id}_{(V^G)^\perp}$.
\end{proof}

\begin{thm}\label{T:Witt}
	Let $V_1$, $V_2$ be two non-degenerate quadratic spaces endowed with actions of a finite group $G$ by isometries. Assume that $|G|$ is invertible in the base field $F$. Suppose that we have orthogonal decompositions preserved by $G$:
	\[V_1=U_1\oPerp W_1, \quad V_2=U_2\oPerp W_2,\] 
	satisfying the following conditions:
	\begin{itemize}
		\item there is a $G$-equivariant isometry between $V_1$ and $V_2$;
		\item $W_1\subset V_1^G$ and $W_2\subset V_2^G$;
		\item $W_1$ and $W_2$ are isometric.
	\end{itemize}
	Then there exists a $G$-equivariant isometry between $U_1$ and $U_2$.
\end{thm}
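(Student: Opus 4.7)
The plan is to reduce to the classical (non-equivariant) Witt cancellation theorem by isolating the $G$-fixed part of each $V_i$ from its orthogonal complement. The key observation is that, by Lemma~\ref{lem:transitive}(1), $V_i^G$ is non-degenerate inside $V_i$, so we have an orthogonal decomposition $V_i = V_i^G \oPerp (V_i^G)^\perp$. Both summands are $G$-stable (the orthogonal complement because the form is $G$-invariant), and hence the hypothesis that $V_1 \simeq V_2$ as $G$-equivariant quadratic spaces yields a $G$-equivariant isometry $V_1^G \simeq V_2^G$ (with trivial $G$-action) and a $G$-equivariant isometry $(V_1^G)^\perp \simeq (V_2^G)^\perp$.

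Next, I would refine the given decomposition $V_i = U_i \oPerp W_i$ using this splitting. Since $W_i \subset V_i^G$, taking orthogonal complements gives $(V_i^G)^\perp \subset W_i^\perp = U_i$. Applying the averaging operator $\frac{1}{|G|}\sum_{g \in G} g$ to any $u \in U_i$ (this lands back in $U_i$ because $U_i$ is $G$-stable and $|G|$ is invertible in $F$) produces a decomposition $u = u^G + (u - u^G)$ with $u^G \in U_i \cap V_i^G$ and $u - u^G \in (V_i^G)^\perp \subset U_i$. This yields a $G$-equivariant orthogonal decomposition
\[
U_i \ = \ (U_i \cap V_i^G) \ \oPerp \ (V_i^G)^\perp.
\]

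On the second factor the required $G$-equivariant isometry was already produced. For the first factor, observe that $U_i \cap V_i^G$ is exactly the orthogonal complement of $W_i$ inside the non-degenerate quadratic space $V_i^G$. Since $V_1^G \simeq V_2^G$ as (ordinary) quadratic spaces and $W_1 \simeq W_2$ with $W_i \subset V_i^G$, classical Witt's extension theorem produces an isometry $V_1^G \to V_2^G$ carrying $W_1$ onto $W_2$, which then restricts to an isometry $U_1 \cap V_1^G \simeq U_2 \cap V_2^G$; this is automatically $G$-equivariant as $G$ acts trivially on both sides. Assembling the two factors gives the desired $G$-equivariant isometry $U_1 \simeq U_2$.

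The main (mild) obstacle I anticipate is precisely that classical Witt's theorem does not, by itself, produce $G$-equivariant isometries, so one cannot naively cancel $W_i$ off from $V_i$ inside the category of $G$-representations. The remedy above, which exploits the invertibility of $|G|$ in $F$ through both the averaging operator and Lemma~\ref{lem:transitive}(1), cleanly separates the problem into a trivial-action piece (handled by classical Witt) and a piece where $G$-equivariance is automatic from the hypothesis $V_1 \simeq V_2$.
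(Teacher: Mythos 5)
Your proof is correct, and it takes a genuinely different route from the paper's. The paper reduces to the case $\dim W = 1$ by diagonalizing $W$ and inducting, and then handles the rank-one case by invoking Lemma~\ref{lem:transitive}(2): the $G$-equivariant orthogonal group of $V_2$ acts transitively on $G$-invariant anisotropic vectors of a fixed nonzero norm, so one can move $x$ to $\phi(x)$ and compare orthogonal complements. You instead never touch Lemma~\ref{lem:transitive}(2) and avoid induction entirely: you use Lemma~\ref{lem:transitive}(1) together with the averaging operator $\frac{1}{|G|}\sum_{g\in G}g$ to split each $V_i$ $G$-equivariantly and orthogonally as $V_i^G\oPerp (V_i^G)^\perp$, observe that $(V_i^G)^\perp\subset W_i^\perp=U_i$, and thereby refine $U_i=(U_i\cap V_i^G)\oPerp(V_i^G)^\perp$. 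The hypothesized $G$-equivariant isometry $V_1\simeq V_2$ preserves both pieces of the splitting and so handles $(V_1^G)^\perp\simeq(V_2^G)^\perp$ for free, while the fixed-part factor $U_i\cap V_i^G=W_i^{\perp}\cap V_i^G$ lives entirely inside the trivially-acted-upon $V_i^G$, where classical Witt cancellation applied to $V_i^G=W_i\oPerp(U_i\cap V_i^G)$ produces the needed isometry, automatically $G$-equivariant. What each approach buys: the paper's argument is terse once Lemma~\ref{lem:transitive}(2) is in place; yours isolates the structural reason the equivariant statement reduces to the classical one (the entire non-trivial representation content sits in $(V_i^G)^\perp$, which $W_i$ does not see), and sidesteps the induction. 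One small remark common to both proofs: the restriction to non-degenerate $W_i$ (which the paper makes explicit, and which your identification of $U_i\cap V_i^G$ as the orthogonal complement of $W_i$ in $V_i^G$ implicitly uses) is in fact automatic here, since $V_i=U_i\oPerp W_i$ with $V_i$ non-degenerate forces both summands to be non-degenerate.
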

\begin{proof} We only give a proof in the case where $W_1$ and $W_2$ are assumed to be non-degenerate\,; the general case (which we do not use in this paper) is left to the reader.
	We may and will identify $W_1$ and $W_2$, and denote both $W$. Let us first treat the case where $W$ is of dimension 1, generated by a vector $x$ with $\langle x,x\rangle\neq 0$. By hypothesis, there is a $G$-equivariant isometry 
	\[ V_1=Fx\oplus U_1\stackrel{\phi}{\longrightarrow} V_2=Fx\oplus U_2.\]
	Denote $y=\phi(x)$ and $U_1'=\phi(U_1)$. Hence $0\neq \langle x,x\rangle =\langle y,y\rangle$ and $x, y$ are both $G$-invariant. Applying Lemma \ref{lem:transitive}, we get a $G$-equivariant isometry $\tau\in O_G(V_2)$ sending $x$ to $y$. Therefore $\tau(U_2)$,  being orthogonal to $y$, must be $U_1'$. In particular, $U_2$ is $G$-equivariantly isometric to $U_1'$, hence also to $U_1$.
	
	In the general case, we diagonalize $W$ and proceed by induction.
\end{proof}

\bibliographystyle{amsalpha}
\bibliography{bib}
\end{document}